\theoremstyle{definition}
\newtheorem{assumption}[definition]{Assumption}
\crefname{ALC@unique}{line}{lines}
\newcommand{\field}[1]{\mathbb{#1}}
\newcommand{\N}{\mathbb{N}}
\newcommand{\R}{\field{R}}
\newcommand{\extR}{\overline \R}
\newcommand{\B}{B}
\newcommand{\norm}[1]{\|#1\|}
\newcommand{\inv}[1]{#1^{-1}}
\newcommand{\grad}{\nabla}
\newcommand{\freevar}{\,\boldsymbol\cdot\,}
\newcommand{\Union}\bigcup
\newcommand{\Isect}\bigcap
\newcommand{\union}\cup
\newcommand{\isect}\cap
\newcommand{\bigunion}\bigcup
\newcommand{\bigisect}\bigcap
\newcommand{\defeq}{:=}
\newcommand{\downto}{\searrow}
\newcommand{\subdiff}{\partial}
\DeclareMathOperator*{\argmin}{arg\,min}
\DeclareMathOperator{\interior}{int}
\DeclareMathOperator{\closure}{cl}
\DeclareMathOperator{\Dom}{dom}
\DeclareMathOperator{\lev}{lev}
\DeclareMathOperator{\diag}{diag}
\DeclareMathOperator{\TV}{TV}
\def \uminus@sym{\setbox0=\hbox{$\cup$}\rlap{\hbox 
        to\wd0{\hss\raise0.5ex\hbox{$\scriptscriptstyle{-}$}\hss}}\box0}
    \def \uminus    {\mathrel{\uminus@sym}}
\newcommand{\iprod}[2]{\langle #1,#2\rangle}
\def \weaktostar@sym{\setbox0=\hbox{$\rightharpoonup$}\rlap{\hbox 
        to\wd0{\hss\raise1ex\hbox{$\scriptscriptstyle{*\,}$}\hss}}\box0}
    \def \weaktostar    {\mathrel{\weaktostar@sym}}
\def\linear{\mathbb{L}}
\def\extR{\overline \R}
\def\realopt#1{\widehat #1}
\def\this#1{#1^k}
\def\nexxt#1{#1^{k+1}}
\def\realoptx{{\realopt{x}}}
\def\realoptz{{\realopt{z}}}
\def\nextz{\nexxt{z}}
\def\thisz{\this{z}}
\def\taub{t}
\def\sigmab{s}
\DeclareFontFamily{U}{mathx}{\hyphenchar\font45}
\DeclareFontShape{U}{mathx}{m}{n}{<-> mathx10}{}
\DeclareSymbolFont{mathx}{U}{mathx}{m}{n}
\DeclareMathAccent{\widebar}{0}{mathx}{"73}
\def\thisz{\this{z}}
\def\nextz{\nexxt{z}}
\DeclareMathOperator{\dist}{dist}
\DeclareMathOperator{\prox}{prox}
\def\bar{\widebar}
\def\Id{\mathop{\mathrm{Id}}}
\def\levJ{\lev_{J(z^0)}J}
\newcommand{\hz}{{\hat{\sigma}}}
\newcommand{\hx}{{\hat{x}}}
\newcommand{\tx}{{\tilde{x}^k}}
\newcommand{\MM}{C}
\newcommand{\Jk}{{J_k}}
\newcommand{\JK}{{J_K}}
\newcommand{\zk}{{z^{k}}}
\newcommand{\zkn}{{z^{k+1}}}
\newcommand{\tzkn}{{\tilde{z}^{k+1}}}
\newcommand{\dd}{\mathfrak{d}}
\def\txk{\tilde x^k}
\renewcommand{\prox}[3]{\mathrm{prox}_{#1#2} \left(#3 \right) }
\newcommand{\Ball}[2]{\B\left( #1;#2 \right)  }
\newcommand{\figsizeJ}{0.38\textwidth}
\newcommand{\figsizeJc}{0.41\textwidth}
\newcommand{\FloatBarrierA}{\FloatBarrier}
\newcommand{\subfloatrecoii}[4]{\subfloat[#1]{\hspace*{#2}\includegraphics[width=#3]{#4}\hspace*{#2}}}%
\newcommand{\subfloatrecoi}[3]{\subfloatrecoii{#1}{0.18cm}{#2}{#3}}%
\newcommand{\subfloatcolorbar}[4]{\hspace{#1}\vtop{\vskip#2\hbox{\includegraphics[height=#3,keepaspectratio]{#4}}}}%
\def\zset{V}
\def\chiset{\Omega}
\newcommand{\proxGtabaa}{$0$}
\newcommand{\proxGtabac}{$x_i$}
\newcommand{\proxGtabba}{$\delta_\zset(x)$}
\newcommand{\proxGtabbc}{
    $
    \mathrm{proj}_\zset 
    \left(   
    x_i
    \right)
    $
}
\newcommand{\proxGtabca}{$\delta_\zset(x) + \tfrac{\beta}{2} \norm{x - \zk}^2$}
\newcommand{\proxGtabcc}{
    $
    \mathrm{proj}_\zset 
    \left( 
    \frac{\tfrac{1}{\taub}x_i + \beta z_i^k}{\tfrac{1}{\taub} + \beta}
    \right) 
    $
}
\newcommand{\proxGtabda}{
    $ 
    \begin{matrix*}[l] \delta_\zset(x) + \tfrac{\beta}{2} \norm{x - z^k}^2 \\ + B_\mathrm{min}(x) + B_\mathrm{max}(x) \end{matrix*}
    $
}
\newcommand{\proxGtabdc}{
    $
    \left\lbrace \begin{matrix*}[l]
    \text{proj}_\zset \left( \frac{l^2_\mathrm{min} z_\mathrm{min} + \tfrac{1}{\taub}x_i + \beta z^k_i}{l^2_\mathrm{min} + \tfrac{1}{\taub} + \beta}\right) , & x_i < z_\mathrm{min} \\
    
    \text{proj}_\zset \left( \frac{\tfrac{1}{\taub}x_i + \beta z^k}{\tfrac{1}{\taub} + \beta}\right) ,& z_\mathrm{min} \le x_i \le z_\mathrm{max} \\
    
    \text{proj}_\zset \left( \frac{l^2_\mathrm{max} z_\mathrm{max} +  \tfrac{1}{\taub}x_i + \beta z^k_i}{l^2_\mathrm{max} + \tfrac{1}{\taub} + \beta}\right) , & x_i > z_\mathrm{max}
\end{matrix*}\right.
$
}
\def\thetitle{Relaxed Gauss–Newton methods with applications to electrical impedance tomography}
\title{\thetitle}
\author{
    Jyrki Jauhiainen\thanks{Department Of Applied Physics, University of Eastern Finland, Kuopio, Finland. \email{jyrki.jauhiainen@uef.fi}}
    \and
    Petri Kuusela\footnotemark[1]%\thanks{University of Eastern Finland, Kuopio, Finland. \email{petri.kuusela@uef.fi}}
    \and
    Aku Seppänen\footnotemark[1]%\thanks{University of Eastern Finland, Kuopio, Finland. \email{aku.seppanen@uef.fi}}
    \and
    Tuomo Valkonen\thanks{ModeMat, Escuela Politécnica Nacional, Quito, Ecuador \emph{and} Department of Mathematics and Statistics, University of Helsinki, Finland. \email{tuomo.valkonen@iki.fi}}
}
\begin{document}
%%%%%%%%%%%%%%%%%%%%%%%%%%%%%%%%%%%%%%%%%%%%%%%

\maketitle

\begin{abstract}
    As second-order methods, Gauss--Newton-type methods can be more effective than first-order methods for the solution of nonsmooth optimization problems with expensive-to-evaluate smooth components. Such methods, however, often do not converge. Motivated by nonlinear inverse problems with nonsmooth regularization, we propose a new Gauss--Newton-type method with inexact relaxed steps. We prove that the method converges to a set of disjoint critical points given that the linearisation of the forward operator for the inverse problem is sufficiently precise. We extensively evaluate the performance of the method on electrical impedance tomography (EIT).
\end{abstract}

%%%%%%%%%%%%%%%%%%%%%%%%%%%%%%%%%%%%%%%%%%%%%%%
\section{Introduction}
\label{sec:intro}
%%%%%%%%%%%%%%%%%%%%%%%%%%%%%%%%%%%%%%%%%%%%%%%
    
The classical Gauss--Newton method can be used for the iterative solution of nonlinear least squares problems $\min_x~ \tfrac{1}{2}\norm{A(x)}^2$. It works by successive linearisation of the nonlinear operator $A \in C^1(\zset; \R^{M})$ defined on $\zset \subset \R^n$. Often, not the least in inverse problems and data science, one wishes to combine such a least squares fitting with a nonsmooth but convex regularization term  $F: \zset \to \R$ incorporating prior information of a good approximate solution to the ill-posed problem $A(x)=0$. We thus wish to solve
\begin{equation}
    \label{eq:minJ}
    \min_x~ J(x) \defeq \frac{1}{2} \norm{A(x)}^2 + F(x).
\end{equation}
One readily extends the idea behind the Gauss--Newton method to this problem: linearise $A$, solve the resulting convex nonsmooth problem to high accuracy, repeat. Unfortunately, such a basic approach rarely converges, especially in inverse problems where $A$ and its differentials almost by definition are not injective. In this work, after several relaxations of the approach, we prove the convergence of a variant of the Gauss--Newton method for \eqref{eq:minJ}, concentrating on applications to electrical impedance tomography (EIT). 

\subsection*{Nonsmooth nonconvex optimization methods}

If $F$ and $A$ are sufficiently smooth, \eqref{eq:minJ} can frequently be solved with Newton's method.
A small degree of nonsmoothness can be dealt with semismooth Newton's method \cite{Mifflin:1977,Qi:1993,Qi:1993a}.
If $F$ is nonsmooth, nonlinear primal-dual proximal splitting (NL-PDPS) \cite{tuomov-nlpdhgm, tuomov-nlpdhgm-redo} is one possibility; see \cite{tuomov-firstorder} for an overview.
Usually NL-PDPS as a first-order method requires thousands of iterations to converge. If the iterations are computationally costly, the method becomes impractical. This can be the case for $A$ the solution operator of a partial differential equation (PDE). We are thus led to Gauss--Newton-type methods that combine both worlds, however, they often fail to converge \cite{tuomov-nlpdhgm}.

Convergence analysis of the classical Gauss--Newton, for the nonlinear least squares problem $\min_x \frac{1}{2}\norm{T(x)}^2$, with $T$ Lipschitz-continuously differentiable, may be found, for example, in \cite{nocedal2006numerical}. In \cite{pang1993nonsmooth} merely locally Lipschitz $T$ is considered. Several works have also studied extensions of the Gauss--Newton method to the general composite minimization problem $\min_x h(T(x))$; see, for example, \cite{burke1995gauss,ferreira2013convergence,li2002convergence}. These works generally assume that the set of minima $C$ of $h$ is ``weakly sharp'', and that the inclusion $T(x) \in C$ has some ``regular points''. In our setting, writing  $h(x, y)=G(x)+F(y)$ for $T(x)=(A(x), x)$, the existence of a ``regular point'' would reduce to the injectivity of the differential $A'(\realoptx)$ at a minimiser $\realoptx$ of $J$. Since, in inverse problems, the range of $A$ is generally much smaller than the domain, such a condition cannot be expected to hold. The assumption of ``weak sharp minima'' amounts to strong metric subregularity of the objective at the solution set. According to \cite{aragon2008characterization}, this is a local form of strong convexity.

In \cite{salzo2012convegence} the Gauss--Newton method is studied for problems of the specific form \eqref{eq:minJ}. There also, $A'(\realoptx)$ has to be injective, and the sub-problem solutions exact. In this case, linear convergence is proved. However, we want to avoid such injectivity assumptions, and also allow the sub-problems to be solved inexactly. To be able to do this, and still obtain convergence, \emph{we will introduce a relaxation term into our subproblems, and relaxation step between the Gauss--Newtons steps}. The former connects our approach to the classical Levenberg--Marquardt method which, indeed, can be seen as a proximal Gauss--Newton method for nonlinear least squares \cite{kaltenbacher2008iterative,hanke1997regularizing}.
We also \emph{will not require the sub-problems to be solved exactly}, merely to obtain sufficient decrease following a condition akin to what has been employed in a different context in \cite{bolte2018nonconvex,attouch2013convergence}. With this, in \cref{sec:gn}, we will show the convergence of iterates of the proposed \emph{Relaxed Inexact Proximal Gauss--Newton} method (RIPGN) to \emph{disjoint components} of critical points. In particular, if the critical points are isolated, we will obtain convergence. 

\subsection*{Electrical impedance tomography}

We will evaluate the proposed method on image (conductivity) reconstruction in Electrical Impedance Tomography (EIT).
This is a large-scale nonlinear PDE-constrained inverse problem. EIT is an imaging technique in which electric conductivity in a target domain is reconstructed from boundary measurements. The relationship between the boundary measurements and the electrical potential and conductivity within the domain are governed by a nonlinear elliptic partial differential equation.
In general, the underlying inverse problem of EIT, which is also known as Calderon's problem \cite{calderon2006inverse}, is ill-posed in the sense that it doesn't depend continuously on the boundary data. However, by assuming certain bounds on the conductivity, it is possible to show an optimal logarithmic modulus of continuity \cite{salo2008calderon}. This, of course, means that even small changes in the conductivity can cause large changes in the boundary values. Cases of nonsmooth conductivities in two dimensions are considered in paper \cite{astala2006calderon}. For cases of piecewise analytic and smooth conductivities in three dimensions, we refer to \cite{kohn1984determining,kohn1985determining} and \cite{sylvester1987global}, respectively.

Theoretical work on the inverse problem of EIT has introduced several direct methods for reconstructing the conductivity. In recent years, so-called D-bar method, which utilizes complex geometrical optics solutions to the Schrödinger formulation of the inverse conductivity problem, has undergone considerable progress \cite{uhlmann2009eit,mueller2012linear}.
In the present, however, we formulate the inverse conductivity problem as a least squares minimization problem between the boundary values from the PDE and measurement data. Optimization and Tikhonov-regularization based approach offers several benefits over the direct methods. It is easier to include physically more accurate boundary conditions, domain shapes and regularization functions. Moreover, in a Bayesian framework, the optimization-based solution can be considered as maximum a posteriori estimates with certain prior distribution \cite{kaipio2006statistical}. With further analysis, error estimates may also be obtained \cite{bardsley2015}.
The underlying optimization problem is, however, often tricky to solve, as the boundary currents depend nonlinearly on the conductivity. This means that the optimization problem is nonconvex. Moreover, total variation type regularization, which help to reconstruct the boundaries of different materials within the target domain, makes the problem nonsmooth.

\subsection*{Organization}

The rest of this paper is organized as follows: first, in \cref{sec:gn}, we examine the convergence of the relaxed inexact proximal Gauss--Newton method. For a certain relaxation parameter, we show that the algorithm converges to a disjoint set of Clarke critical points, given that the  linearisation of the operator $A$ sufficiently well approximates the original operator. In \cref{sec:imp}, we provide a more detailed description of the algorithm and explain how to reliably solve linearised nonsmooth subproblems in the Gauss--Newton scheme. In \cref{sec:app,sec:numexs}, by using EIT as an example, we study numerically and experimentally whether the relaxed Gauss--Newton method improves the computational efficiency of the image reconstructions compared to alternative optimization methods. In these studies, we utilize sythetic data from a water tank setup and experimental measurement data from so-called EIT based sensing skin setup. This is a system for detecting surface changes, eg. cracks, on the given target\cite{Hallaji2014skin}. In \cref{sec:morecases,sec:morecasestwo} we provide further reconstructions for these setups and their variants.

%%%%%%%%%%%%%%%%%%%%%%%%%%%%%%%%%%%%%%%%%%%%%%%
\section{Convergence properties of the relaxed inexact proximal Gauss--Newton method}
\label{sec:gn}
%%%%%%%%%%%%%%%%%%%%%%%%%%%%%%%%%%%%%%%%%%%%%%%

We intend to solve problem \eqref{eq:minJ} by successive linearisations of $A$: for some $\zk$ we take
\[
    A_k(x) \defeq \tilde A_{\zk}(x)
    \quad\text{with}\quad
    \tilde A_y(x) \defeq A(y) + \nabla A(y)^*(x-y)
\]
A standard Gauss--Newton-type approach would then solve on each iteration the linearised, convex problem
\begin{equation}\label{eq:minJk}
    \min_x~ J_k(x) \defeq \frac{1}{2} \norm{A_k(x)}^2 + F(x)
\end{equation}
and update $\zkn \defeq \this{\tilde x}$ to form the linearisation point of the next iteration.
As we have remarked in the introduction, such a method seldom converges. 
Our plan, to obtain a convergent method, is to solve for some \emph{proximal parameter} $\beta>0$ the modified problem
\begin{equation}
    \label{eq:mintJk}
    \min_x~ \tilde{J}_k(x) \defeq \frac{1}{2} \norm{A_k(x)}^2 + F(x) + \frac{\beta}{2} \norm{x-z^k}^2 = J_k(x) + \frac{\beta}{2} \norm{x-z^k}^2.
\end{equation} 
Then we take the linearisation point $\zkn$ as an interpolation between $\this{\tilde x}$ and $\zk$, precisely
\[
    \zkn \defeq (1-w)\zk + w\tx
\]
for a sufficiently small \emph{relaxation parameter} $w \in (0, 1]$. 
Furthermore, we allow $\this{\tilde x}$ to be solved inexactly from \eqref{eq:mintJk}. This yields our outline method of \cref{alg:gn-overrelax}, the \emph{relaxed inexact proximal Gauss--Newton} method (RIPGN).

\begin{algorithm}[t!]
    \caption{Outline of relaxed inexact proximal Gauss--Newton method (RIPGN).}
    \label{alg:gn-overrelax}
    %\algsetup{linenosize=\scriptsize}
    %\footnotesize
    \begin{algorithmic}[1]
        \REQUIRE  Convex, proper, lower semicontinuous $F: \R^N \to \extR$ and $A \in C^1(\Dom F; \R^{M})$.
        \REQUIRE %Proximal parameter $\beta \ge 0$ and 
            Relaxation parameter $w > 0$.
        \STATE Choose an initial iterate $z^0 \in \Dom F$.
        \FORALL{$k \ge 0$}
        \STATE\label{step:gn-overrelax-e} Find an approximate solution $\tx$ to \eqref{eq:mintJk}.
        \STATE Update $\zkn \defeq (1-w)\zk + w\tx$
        \ENDFOR

    \end{algorithmic}
\end{algorithm}

We now prove the convergence of the method with $\beta>0$. In \cref{app:zerobeta} we show that it is possible to take $\beta=0$ under strong metric subregularity. 
We need assumptions that guarantee that the solutions of the linearised subproblems stay in a bounded set, and we need the linearisations $\tilde A_y$ to locally approximate $A$ sufficiently well:

\begin{assumption}
    \label{ass:asecond}     
    $F: \R^N \to \extR$ is convex, proper, and lower semicontinuous, the operator $A \in C^1(\Dom F; \R^{M})$, and
    \[
        J(x) \defeq \frac{1}{2} \norm{A(x)}^2 + F(x).
    \]
    Given an initial iterate $z^0 \in \R^N$, the sublevel set $\levJ$ is bounded, $\inf F > -\infty$, and $A_{\max} \defeq \sup_{z \in \Dom F} \norm{A(z)} < \infty$. Moreover, for some $\dd, \MM>0$ , the linearization error
    \[
        \norm{A(x)- \tilde A_y(x)} \leq \MM \norm{x-y}^2
        \quad (x \in \closure \Ball{y}{\dd},\, y \in \levJ).
    \]
\end{assumption}

Here $B(x,r)$ is the open ball of radius $r$ at $x$ while $\closure B(x, r)$ is its closure. We write $\Dom F \defeq \{ x \in \R^N \mid F(x)<\infty\}$ for the effective domain of $F$ and $\lev_c J \defeq \{ x \in \R^N \mid J(x)\le c\}$ for the $c$-sublevel set of $J$.
We will also write $\subdiff J_k(x)$ for the subdifferential of the convex functions $J_k$ at $x$, and, moreover, denote by $\subdiff_C J(x)$ the Clarke subdifferential of the non-convex function $J$ at $x$, as defined in \cite{clarke1990optimization}. We call a point $x$ satisfying $0 \in \subdiff_C J(x)$ Clarke-critical.
Then we have:

\begin{theorem}
    \label{thm:gn-convergence}
    Suppose \cref{ass:asecond} holds and, for some $\beta, \varepsilon>0$,
    \begin{equation}
        \label{eq:gn-convergence:wbound}
        0 < w \le \min \left\lbrace 1,\frac{\dd}{\sqrt{2\inv\beta (J(z^0) - \inf F)}},\frac{\beta-\varepsilon}{2\MM A_{\max}} \right\rbrace.
    \end{equation}
    On \cref{step:gn-overrelax-e} of \cref{alg:gn-overrelax}, find an approximate minimiser $\this{\tilde x}$ to \eqref{eq:mintJk} specifically satisfying
    \begin{enumerate}[nosep]
        \item For some $e^k \in \subdiff \tilde J(\this{\tilde x})$ we have $\this e \to 0$ as $k \to \infty$, and
        \item either $\tilde J_k(\thisz) \ge \tilde J_k(\this{\tilde x})$ with $\tilde x^k \ne z^k$, or $\tilde x^k=z^k \in \inv{[\subdiff \tilde J_k]}(0)$.
    \end{enumerate}
    Then the iterates satisfy:
    \begin{enumerate}[label=(\roman*),nosep]
        \item\label{item:gn-convergence:decreasing-value}
            $J(\zk)$ is monotonically decreasing; indeed, $J(\zk) \downto L$ for some $L \in \R$.
        \item\label{item:gn-convergence:accumulation}
            Any accumulation point $\hat x$ of $\{\zk\}_{k \in \N}$ is Clarke-critical and satisfies $J(\hat x)=L$;
        \item\label{item:gn-convergence:disjoint-component}
            Indeed, $\dist(z^k, U) \to 0$ for a disjoint component $U$ of $\zset_L \defeq \{\hat x \in \zset \mid 0 \in \subdiff_C J(\hat x),\, J(\hat x)=L\}$.
    \end{enumerate}
\end{theorem}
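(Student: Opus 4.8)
The plan is to instantiate the standard descent-method template — monotone sufficient decrease of the objective, a vanishing subgradient along the iterates, and an Ostrowski-type argument for the cluster set — with the essential twist that \cref{alg:gn-overrelax} decreases the \emph{strongly convex surrogate} $\tJk$, not $J$ itself, so its decrease must be transported back to $J$ through two approximation steps whose errors are kept dominated precisely by the bound on $w$ in \eqref{eq:gn-convergence:wbound}.

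\emph{Step 1 (sufficient decrease and boundedness).} I would first show by induction that $J(\zk)\le J(z^0)$, hence $\zk\in\levJ$. Granting this, since $A_k(\zk)=A(\zk)$ we have $\tJk(\zk)=\Jk(\zk)=J(\zk)$, so condition~(2) gives $\tJk(\tx)\le J(\zk)$ and, because $\Jk\ge\inf F$, $\tfrac{\beta}{2}\norm{\tx-\zk}^2\le J(z^0)-\inf F$; the second term in \eqref{eq:gn-convergence:wbound} then forces $\norm{\zkn-\zk}=w\norm{\tx-\zk}\le\dd$, so the linearisation-error bound of \cref{ass:asecond} is available at $\zkn\in\closure\Ball{\zk}{\dd}$. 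Using the $\beta$-strong convexity of $\tJk$ along $\zkn=(1-w)\zk+w\tx$ together with $\tJk(\tx)\le J(\zk)$ yields, after a short manipulation, $\Jk(\zkn)\le J(\zk)-\tfrac{\beta}{2w}\norm{\zkn-\zk}^2$; combining this with $\tfrac12\norm{A(\zkn)}^2-\tfrac12\norm{A_k(\zkn)}^2\le\norm{A(\zkn)}\,\norm{A(\zkn)-A_k(\zkn)}\le\MM A_{\max}\norm{\zkn-\zk}^2$ upgrades it to $J(\zkn)\le J(\zk)-\bigl(\tfrac{\beta}{2w}-\MM A_{\max}\bigr)\norm{\zkn-\zk}^2$, where the third term in \eqref{eq:gn-convergence:wbound} makes the coefficient at least $\tfrac{\varepsilon}{2w}>0$. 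This closes the induction, gives \ref{item:gn-convergence:decreasing-value} (as $J\ge\inf F>-\infty$), and by telescoping $\sum_k\norm{\zkn-\zk}^2<\infty$, so $\norm{\zkn-\zk}\to0$ and $\norm{\tx-\zk}=w^{-1}\norm{\zkn-\zk}\to0$.

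\emph{Step 2 (accumulation points).} As $J$ is lower semicontinuous, $\levJ$ is compact, so $\{\zk\}$ has accumulation points; fix one via a subsequence $z^{k_i}\to\hat x$, along which also $\tilde x^{k_i}\to\hat x$ and $z^{k_i+1}\to\hat x$. By the subdifferential sum rule, $e^{k_i}=\nabla A(z^{k_i})A_{k_i}(\tilde x^{k_i})+\xi^{k_i}+\beta(\tilde x^{k_i}-z^{k_i})$ for some $\xi^{k_i}\in\subdiff F(\tilde x^{k_i})$; letting $i\to\infty$ and using $e^{k_i}\to0$ (condition~(1)), continuity of $A$ and $\nabla A$, and $\tilde x^{k_i}-z^{k_i}\to0$, we get $\xi^{k_i}\to-\nabla A(\hat x)A(\hat x)$, whence outer semicontinuity of $\subdiff F$ gives $0\in\nabla A(\hat x)A(\hat x)+\subdiff F(\hat x)=\subdiff_C J(\hat x)$ by the Clarke sum rule (valid since $\tfrac12\norm{A(\freevar)}^2$ is $C^1$). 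To identify the value I would chain three observations along the subsequence: the subgradient inequality $F(\hat x)\ge F(\tilde x^{k_i})+\iprod{\xi^{k_i}}{\hat x-\tilde x^{k_i}}$ with lower semicontinuity of $F$ forces $F(\tilde x^{k_i})\to F(\hat x)$, hence $J(\tilde x^{k_i})\to J(\hat x)$; the linearisation-error bound (applicable once $\tilde x^{k_i}\in\closure\Ball{z^{k_i}}{\dd}$) gives $\abs{J_{k_i}(\tilde x^{k_i})-J(\tilde x^{k_i})}\to0$; and convexity of $J_{k_i}$ along $z^{k_i+1}=(1-w)z^{k_i}+w\tilde x^{k_i}$, together with the same $J_{k_i}$-versus-$J$ bound, gives $wJ_{k_i}(\tilde x^{k_i})\ge J(z^{k_i+1})-(1-w)J(z^{k_i})-\MM A_{\max}\norm{z^{k_i+1}-z^{k_i}}^2\to wL$, which with $J_{k_i}(\tilde x^{k_i})\le J(z^{k_i})\to L$ pins $J_{k_i}(\tilde x^{k_i})\to L$ and thus $J(\hat x)=L$. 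This is \ref{item:gn-convergence:accumulation}.

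\emph{Step 3 (disjoint component) and the main obstacle.} The set $\Omega$ of accumulation points of $\{\zk\}$ is nonempty and compact, and since $\{\zk\}$ is bounded with $\norm{\zkn-\zk}\to0$, a standard Ostrowski-type argument makes $\Omega$ connected and forces $\dist(\zk,\Omega)\to0$ (if $\Omega$ split into two positively separated closed pieces, the vanishing steps would have to produce a further accumulation point strictly between them). By Step 2, $\Omega\subseteq\zset_L$, and being connected it lies inside a single connected component $U$ of $\zset_L$, so $\dist(\zk,U)\le\dist(\zk,\Omega)\to0$, which is \ref{item:gn-convergence:disjoint-component}. I expect the main obstacle to be the equality $J(\hat x)=L$ in Step 2: lower semicontinuity of $J$ only gives $J(\hat x)\le L$, and since $F$ need not be continuous one genuinely has to use the subgradient inequality for $\xi^{k_i}$ to turn one-sided control of $F(\tilde x^{k_i})$ into convergence, and then propagate the limiting value across both the relaxation step $\zkn=(1-w)\zk+w\tx$ and the linearisation error; the remainder is careful bookkeeping with the constants in \eqref{eq:gn-convergence:wbound}.
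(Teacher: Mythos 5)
Your proposal is correct, and its core descent machinery coincides with the paper's: the same induction establishes $\zk\in\levJ$, the admissibility $\zkn\in\closure\Ball{\zk}{\dd}$ from the second bound on $w$, and the per-step decrease $J(\zk)-J(\zkn)\ge\tfrac{\varepsilon}{2w}\norm{\zkn-\zk}^2$ from the third bound; your route through $\beta$-strong convexity of $\tJk$ is arithmetically equivalent to the paper's two inequalities \eqref{eq:j-first-estim}--\eqref{eq:j-second-estim} (plain convexity of $\Jk$ plus the explicit proximal term), and your criticality argument for accumulation points is exactly the paper's passage to the limit in \eqref{eq:to-limit-in-subdiff-2} via outer semicontinuity of $\subdiff F$. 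You differ, to your advantage, in two places. First, you actually prove $J(\hat x)=L$: the paper asserts it in \ref{item:gn-convergence:accumulation} and uses it in \ref{item:gn-convergence:disjoint-component}, but its written proof only establishes Clarke-criticality, and since $F$ is merely lower semicontinuous the equality is not automatic; your chain --- the subgradient inequality at $\this{\tilde x}$ with the already-identified limit of $\xi^{k_i}$ forcing $F(\tilde x^{k_i})\to F(\hat x)$, the vanishing linearisation error $J_{k_i}(\tilde x^{k_i})-J(\tilde x^{k_i})\to0$, and the convexity estimate along $\zkn$ pinning $J_{k_i}(\tilde x^{k_i})\to L$ --- closes this gap correctly. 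Second, for \ref{item:gn-convergence:disjoint-component} you use the classical Ostrowski argument (bounded iterates with $\norm{\zkn-\zk}\to0$ have a compact connected cluster set $\Omega$ with $\dist(\zk,\Omega)\to0$), whereas the paper argues by contradiction with two separated neighbourhoods and the level $L'\defeq\inf_{x\notin U_1^\epsilon\union U_2^\epsilon}J(x)>L$; your version avoids having to justify $L'>L$ and delivers $\dist(\zk,U)\to0$ directly, at the cost of invoking (or reproving) the Ostrowski connectedness lemma. The only point you leave implicit is the degenerate branch $\tilde x^k=\zk\in\inv{[\subdiff\tJk]}(0)$ of condition (2), which the paper dispatches first as finite termination at a critical point; your non-strict inequalities still cover it, but it is worth a sentence.
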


\begin{proof}
    Suppose first that $\tilde x^k=z^k \in \inv{[\subdiff \tilde J_k]}(0)$ for some $k \in \N$.
    Since $\subdiff \tilde J_k(z^k)=\subdiff J_k(z^k)=\subdiff_C J(z^k)$, we obtain $z^{k+1}=z^k$, so that there is nothing left to prove: the algorithm has converged to a critical point in a finite number of iterations.

    So, by assumption,  $\tilde J_k(\thisz) \ge \tilde J_k(\this{\tilde x})$ with $\tilde x^k \ne z^k$ for all $k \in \N$. 
    Using \eqref{eq:mintJk} we now obtain
    \begin{equation}
        \label{eq:j-first-estim}
        J(z^k) - \Jk (\tx)
        = \tilde J_k(z^k) - \tilde J_k(\tx) + \frac{\beta}{2} \norm{\tx - \zk}^2
        \ge \frac{\beta}{2} \norm{\tx - \zk}^2 > 0.
    \end{equation}
    Since $w \leq 1$, from the convexity of $J_k$ we have
    \begin{equation}
        \label{eq:j-second-estim}
        J(z^k)- \Jk(\zkn)
        \ge J(z^k) - \bigl((1-w)J_k(\zk) + w\Jk(\tx)  \bigr) \\
        = w \bigl( J(\zk)- \Jk(\tx) \bigr).
    \end{equation}
    Consequently, by \eqref{eq:j-first-estim},
    \begin{equation*}
        J(z^k)- \Jk(\zkn) \ge \frac{w\beta}{2} \norm{\tx - \zk}^2 > 0.
    \end{equation*}
    Now we show by induction that 
    \begin{equation}
        \label{eq:induction} 
        J(z^0) \ge J(\zk)
        \quad (k \ge 0).
    \end{equation}
    As a by-product, we will verify \cref{item:gn-convergence:decreasing-value}, and obtain useful estimates for \cref{item:gn-convergence:accumulation} and \cref{item:gn-convergence:disjoint-component}.
    
    \textbf{Induction base:} Obviously $J(z^0) \ge J(\zk)$ holds for $k=0$.
    
    \textbf{Induction step:} Suppose $J(z^0) \ge J(z^k)$. We show $J(z^0) \ge J(z^{k+1})$. From \eqref{eq:j-first-estim} we have
     \[
        J(z^0) - J_k(\txk) \ge J(\zk) - J_k(\txk) \ge \frac{\beta}{2} \norm{\txk - \zk}^2.
    \]
    Since ${J_k}(\txk)\ge \inf F$, we have
    \[
        \norm{\txk - \zk} \le \sqrt{2\inv\beta (J(z^0) - \inf F)} \defeq r,
    \]
    and since $w \le \delta/r$, it follows
    \begin{equation}
        \label{eq:zkn-zk-s}    
        \norm{ \zkn - \zk} = w\norm{\txk- \zk} \le w B \le \frac{\dd}{r}r = \dd,
    \end{equation}
    thus
    $\zkn \in \closure B(\zk; \dd)$. From \cref{ass:asecond} with $h \defeq \zkn - \zk$,
    \begin{equation}
        \label{eq:aest}
        \norm{{A(\zkn)}- {A_k(\zkn)}} \le \MM \norm{\zkn-\zk}^2 \leq \MM \norm{h}^2.
    \end{equation}
   Now using \eqref{eq:aest} and the definition of $A_{\max}$ for the inequality in the next estimate, we obtain
    \begin{equation}
        \label{eq:aest2}
        \begin{aligned}[t]
        \frac{1}{2}\norm{A_k(\zkn)}^2-\frac{1}{2}\norm{A(\zkn)}^2
        &=
        \frac{1}{2}\norm{A(\zkn)-A_k(\zkn)}^2
        \\
        \MoveEqLeft[-1]
        +\iprod{A_k(\zkn)-A(\zkn)}{A(\zkn)}
        \\
        &
        \ge \iprod{A_k(\zkn)-A(\zkn)}{A(\zkn)}
        \ge
        -\MM A_{\max} \norm{h}^2.
        \end{aligned}
    \end{equation}
    Furthermore, using \eqref{eq:aest2},
    \begin{align*}
        %  J(\zk)- J(\zkn) &= J(\zk) - G(\zkn) -F(\zkn) \\
        J(\zk)- J(\zkn) &= J(\zk) - \frac{1}{2}\norm{A(\zkn)}^2 -F(\zkn) \\
        %&\geq J(\zk) - \frac{1}{2}\left( \tM\norm{h}^2 + \norm{ A_k(\zkn)} \right)^2  - F(\zkn)\\
        %&= J(\zk) - \frac{1}{2}\left( \norm{A_k(\zkn)}^2  + \tM^2\norm{h}^4 + 2\tM\norm{h}^2\norm{A_k(\zkn)} \right) - F(\zkn) \\
        &\ge J(\zk) - \frac{1}{2} \norm{A_k(\zkn)}^2 - F(\zkn) - \MM A_{\max} \norm{h}^2 \\
        &= J(\zk) - J_k(\zkn)  - \MM A_{\max} \norm{h}^2.
        \\
    \intertext{Using \eqref{eq:j-second-estim}, \eqref{eq:zkn-zk-s}, and \eqref{eq:j-first-estim}, we continue}
        J(\zk)- J(\zkn)
        &\geq w\bigl( J(\zk)- \JK(\txk) \bigr) - \MM A_{\max} \norm{h}^2 \\
        &= w\bigl( J(\zk)- \JK(\txk) \bigr) - \frac{2w^2 \MM A_{\max} \norm{\txk - \zk}^2}{2} \\
            &\ge w\left( \frac{\beta\norm{\txk - \zk}^2  - 2w \MM A_{\max}\norm{\txk - \zk}^2}{2} \right).
    \end{align*}%
    Since \eqref{eq:gn-convergence:wbound} implies $\beta \ge 2w\MM A_{\max} + \varepsilon$ for some $\varepsilon>0$, we deduce that
    \begin{equation*}
    %    \label{eq:jdecrease}
        J(\zk)- J(\zkn) \ge \frac{w\varepsilon}{2} \norm{\zk - \txk}^2 > 0.
    \end{equation*}
    With this and $J(z^0) \ge J(\zk)$, we get $J(z^0) > J(\zkn)$. This completes the proof of the induction step and consequently \eqref{eq:induction}.
    
    In the process, we obtained
    \begin{equation}
        \label{eq:jdecrease}
        J(\zk)- J(\zkn) \ge \frac{w\varepsilon}{2} \norm{\zk - \tx}^2\quad\text{and}\quad J(\zk) > J(\zkn)
        \quad (k \ge 0).
    \end{equation}    
    Since $\levJ$ is bounded and $J$ is proper and lower semicontinuous, this verifies \cref{item:gn-convergence:decreasing-value}.
    
    To verify \cref{item:gn-convergence:accumulation},
    we observe that summing \eqref{eq:jdecrease} over $\ell=0,\ldots,k-1$ and telescoping gives
    \begin{equation*}
        J(z^0) \ge J(z^k) +  \frac{w\varepsilon}{2} \sum_{\ell=0}^{k-1} \norm{z^\ell - \tilde x^\ell}^2 \ge \inf F + \frac{w\varepsilon}{2}\sum_{\ell=0}^{k-1} \norm{z^\ell - \tilde x^\ell}^2
        \quad (k \ge 1).
    \end{equation*}
    This implies $z^k - \tilde x^k \to 0$.  
    We have assumed that $\this e \in\subdiff \tilde J_k(\this{\tilde x})$ for some $e^k \to 0$.
    With $\subdiff \tilde J_k$ further expanded, using that
    \[
        \grad \left(\frac{1}{2}\norm{A_k(x)}^2\right)=\grad A_k(x) A_k(x)=\grad A(z^k)[A(z^k)+\grad A(z^k)^*(x-z^k)],
    \]
    this is to say
    \begin{equation}
        \label{eq:to-limit-in-subdiff-2}
        \this e \in \grad A(z^k)[A(z^k)+\grad A(z^k)^*(\tilde x^k-z^k)] + \subdiff F(\tilde x^k) + \beta (\tilde x^k-z^k) .
    \end{equation}
    Since $\{\this z\}_{k \in \N} \subset \levJ$, which by assumption is bounded, we can thus find a converging subsequence $z^{k_i} \to \hat x$ for some $\hat x$. Necessarily $\hat x \in \Dom F$.
    
    Recall that the subdifferential mapping $x \mapsto \subdiff F(x)$ is outer semicontinuous \cite{hiriarturruty2001fundamentals}, that is, if $q^{k_i} \in \subdiff F(z^{k_i})$ and also $q^{k_i} \to \hat q$, then $\hat q \in \subdiff F(\hat x)$.
    As $A \in C^1(\Dom F; \R^M)$, passing to the subsequential limit in \eqref{eq:to-limit-in-subdiff-2}, using the outer semicontinuity and $\this e \to 0$, we obtain
    \begin{equation}
        \label{eq:to-limit-in-subdiff-3}
        0 \in \grad A(\hat x)A(\hat x) + \subdiff F(\hat x).
    \end{equation}
    Of course, $\grad A(\hat x)^*A(\hat x)=\grad \left(  \tfrac{1}{2}\norm{A(\hx)}^2 \right) $. %\grad G(\hat x)$.
    By standard calculus rules for the Clarke subdifferential \cite{clarke1990optimization}, \eqref{eq:to-limit-in-subdiff-3} is therefore to say $0 \in \subdiff_C J(\hat x)$.
    This proves \cref{item:gn-convergence:accumulation}.

    Finally, to prove \cref{item:gn-convergence:disjoint-component}, let $\hat x_1$ and $\hat x_2$ be two different accumulation points of $\{z^k\}_{k \in \N}$.
    To reach a contradiction, suppose they would lie in two disjoint subsets $U_1$ and $U_2$ of $\zset_L$. Without loss of generality, we may assume that $\zset_L=U_1 \union U_2$. Since $\zset_L$ is closed (by $J$ being lower semicontinuous and $\subdiff_C J$ outer semicontinuous), so are $U_1$ and $U_2$. We can therefore find $\epsilon>0$ such that $U_1^{2\epsilon}$ and $U_2^{2\epsilon}$ remain disjoint, where $U_j^\epsilon \defeq U_j + \B(0, \epsilon)$, ($j=1,2$). Let $L' \defeq \inf_{x \in \zset \setminus (U_1^\epsilon \union U_2^\epsilon)} J(x)$. Then $L'>L$.
    By definition of $\hat x_1$ and $\hat x_2$ as accumulation points, there exist subsequences $U_1^\epsilon \ni z^{k_i^1} \to \hat x_1$ and $U_2^\epsilon \ni z^{k_i^2} \to \hat x_2$ that satisfy $J(z^{k_i^1}) \to J(\hat x_1) = L < L'$ and $J(z^{k_i^2}) \to J(\hat x_2) = L < L'$. By passing to a subsequence, we may assume without loss of generality that $k_i^1 < k_i^2 < k_{i+1}^1$.
    Since $U_1^{2\epsilon}$ and $U_2^{2\epsilon}$ are disjoint, and $\norm{z^{k+1}-z^k}=w\norm{z^k-\tilde x^k} \to 0$ this implies for $i$ large enough the existence of $k_i^* \in \N$ such that $z^{k_i^*} \in \zset \setminus (U_1^\epsilon \union U_2^\epsilon)$ with $k_i^1 < k_i^* < k_i^2$. Then $J(z^{k_i^*}) \ge L' > L$.
    However, since $\{J(z^k)\}_{k \in \N}$ is decreasing and $J(z^{k_i^1}) \to L$, we also have $\limsup_{i \to \infty} J(z^{k_i^*}) \le L$. This contradiction establishes that $\hat x_1$ and $\hat x_2$ must lie in the same disjoint component of $\zset_L$.
\end{proof}

\begin{remark}[More general data terms]
    Let $g: \R^n \to \R$ be subadditive and $L$-Lipschitz, for example, $g=\norm{\freevar}_p$, $p \in [1, \infty]$.
    How could we replace $\frac{1}{2}\norm{A(x)}^2$ by $g(A(x))$ in \eqref{eq:minJ}? 
    The inequality \eqref{eq:aest2} is the crucial part of the proof to work with such an alternative fitting function.
    Due to subadditivity we have $g(A_k(\zkn))-g(A(\zkn)) \ge -g(A(\zkn)-A_k(\zkn))$.
    If for some $C'>0$ we assume
    \begin{equation}
        \label{eq:gass-alt}
        g(A(\zkn)-A_k(\zkn)) \le C' \norm{h}^2,
    \end{equation}
    then instead of \eqref{eq:aest2} we obtain $g(A_k(\zkn))-g(A(\zkn)) \ge -C' \norm{h}^2$.
    The proof now goes through if we replace the third bound on $w$ in \eqref{thm:gn-convergence} by $\frac{\beta-\epsilon}{2C'}$. For $g=\norm{\freevar}_1$ and $C'=\MM$, \eqref{eq:gass-alt} is simply \cref{ass:asecond}, so no additional assumptions are needed for that choice.
\end{remark}

\begin{remark}[Unique accumulation point under second-order growth conditions]
    \label{rem:second-order-growth}
    If one of the accumulation points $\hat x$ of $\{z^k\}_{k \in \N}$ is actually a unique local minimiser, for example, $J$ satisfies a second-order growth condition around $\hat x$, then $S=\{\hat x\}$ forms a disjoint component of $\zset_L$. Consequently, $\hat x$ has to be the unique accumulation point of $\{z^k\}_{k \in \N}$. It follows that the whole sequence convergences to $\hat x$.
\end{remark}

\begin{remark}[Convergence with a larger relaxation parameter]
    \label{rem:convergence-with-larger-steps}
    There are two obvious strategies to replace the relaxed variable $\zkn$ by $\tzkn \defeq (1-w_k)\zk+w_k\tx$ for some stepwise relaxation parameter $w_k$ that violates the bounds \eqref{eq:gn-convergence:wbound}:
    \begin{enumerate}[label=\alph*)]
        \item Since $\MM A_{\max}$ in the third bound of \eqref{eq:gn-convergence:wbound} arises from \eqref{eq:aest2}, we can replace it by the exact “fractional linearisation error”
        \[
            \max\left\{0, \frac{\norm{A(\tzkn)}^2-\norm{A_k(\tzkn)}^2}{2\norm{\tzkn-\zk}^2}\right\}
            =
            \max\left\{0, \frac{\norm{A(\tzkn)}^2-\norm{A_k(\tzkn)}^2}{2 w_k \norm{\zk-\tx}^2}\right\}.
        \]
        This depends on $w_k$ through $\tzkn$. We therefore need to perform a line search to find (the largest) $w_k$ satisfying this condition subject to the first two bounds of \eqref{eq:aest2}.

        \item If the inequality \eqref{eq:jdecrease} holds for $\tzkn$ in place of $\zkn$. We can again use a line search to find a parameter $w_k \ge w$ satisfying this.
    \end{enumerate}
\end{remark}

%%%%%%%%%%%%%%%%%%%%%%%%%%%%%%%%%%%%%%%%%%%%%%%
\section{Solution of the inner problem and other implementation details}
\label{sec:imp}
%%%%%%%%%%%%%%%%%%%%%%%%%%%%%%%%%%%%%%%%%%%%%%%

In this section, we discuss how to solve the subproblems \eqref{eq:minJk} generated by \cref{alg:gn-overrelax}. Furthermore, we present a framework of how to apply RIPGN to (nonsmooth and nonconvex) regularized nonlinear least squares problems. % Although there is variety of algorithms capable of solving these problems, primal dual proximal splitting (PDPS), which is more commonly known as Chambolle--Pock algorithm \cite{chambolle2010first}, is utilized here, as it has been shown to converge in both theoretical and numerical studies\todo{more citations?}. Furthermore, we will derive a balanced version of the PDPS, in order to handle the changing scale of the linearised forward operator, and possibly speed-up the convergence. The speed-up property of the balancing method will be tested with EIT applications in \cref{sec:app}. In addition, we will describe how to formulate the subproblems for the balanced PDPS. 

%%%%%%%%%%%%%%%%%%%%%%%%%%%%%%%%%%%%%%%%%%%%%%%
\subsection{Balanced primal dual proximal splitting for the linearised subproblem}
%%%%%%%%%%%%%%%%%%%%%%%%%%%%%%%%%%%%%%%%%%%%%%%

To solve the nonsmooth but convex problems \eqref{eq:mintJk}, we utilize a variant of the primal-dual proximal splitting (PDPS) due to  Chambolle and Pock \cite{chambolle2010first}). The basic version of the method applies to $\min G + F_1 \circ K_1$ for some convex $G$ and $F_1$ and a linear operator $K_1$. The function $G$ and the Fenchel conjugate $F_1^*$ need to have easily calculable proximal maps
\[
    \prox{t}{G}{z} \defeq \argmin_x~G(x) + \frac{1}{2t}\norm{x-z},
\]
where $t>0$ is a step length parameter.
However, our problem \eqref{eq:mintJk} with $J_k$ defined in \eqref{eq:minJk} will typically involve several operators; in case of total variation regularization of $x$,
\[
    \min_x~ \frac{1}{2}\norm{A_k(x)}^2 + \alpha\norm{\grad_h x} + \frac{\beta}{2}\norm{x-z^k}.
\]
Proximal maps for functions composed with operators are generally not easily calculable.
Therefore, the linear part of $A_k$ and the discretised gradient $\grad_h$ will both have to go into $K_1$; it will consist of two different blocks with different scales, which moreover vary between the subproblems due to changing linearisations of $A_k$. We will therefore adapt the algorithm to the scales of these blocks following \cite{tuomov-blockcp,pock2011iccv}. 

\subsection{Spatially-adapted primal-dual proximal splitting}

For convex, proper, lower semicontinuous $G: X \to \extR$, $F_1: Y_1 \to \extR$, $F_2: Y_2 \to \extR$ and linear operators $K_1 \in \linear(X; Y_1)$, $K_2 \in \linear(X; Y_2)$, on (finite-dimensional) Hilbert spaces $X,Y_1$, and $Y_2$, we consider
\begin{equation}
    \label{eq:blockproblem}
    \min_{x \in X}~ G(x) + F_1(K_1 x) + F_2(K_2 x).
\end{equation}
With $Kx \defeq (K_1x, K_2x)$ and $y=(y_1, y_2) \in Y \defeq Y_1 \times Y_2$, we can write the problem using the convex conjugates of $F_1$ and $F_2$ as
\[
    \min_{x \in X} \max_{y \in Y}~ G(x) + \iprod{Kx}{y} - F_1^*(y_1) - F_2^*(y_2).
\]
Due to potentially different scales of the ``blocks'' $y_1$ and $y_2$ of $y$, we use two different dual step length parameters for numerical efficiency. This has been called ``diagonal preconditioning'' in \cite{pock2011iccv} and ``spatial adaptation'' in \cite{tuomov-blockcp}. The latter also introduces ways to perform acceleration when strong convexity is present in only some blocks. In either case, without acceleration, such a block-adapted method requires specifying step lengths $\taub, \sigmab_1, \sigmab_2>0$ satisfying
\[
    \Id > \taub \Sigma^{1/2}KK^*\Sigma^{1/2}
    \quad\text{for}\quad \Sigma \defeq \diag(\sigmab_1 \Id, \sigmab_2 \Id),
\]
where we write $\Id: x \mapsto x$ for the identity operator.
Since $KK^*=\begin{psmallmatrix} K_1K_1^* & K_1K_2^* \\ K_2K_1^* & K_2K_2^*\end{psmallmatrix}$, by Young's inequality, this condition holds if for some $\lambda>0$ and estimates $L_1 \ge \norm{K_1}$ and $L_2 \ge \norm{K_2}$,
\begin{equation}
    \label{eq:balancing:stepcond}
    1 > (1+\lambda)\taub\sigmab_1L_1^2
    \quad\text{and}\quad
    1 > (1+\inv\lambda)\taub\sigmab_2L_2^2.
\end{equation}
\Cref{alg:alg-blockcp-fulldual} specializes the spatially adapted or diagonally preconditioned PDPS to the two-dual-block case and these step length conditions; for more general descriptions, stochastic sampling, and acceleration, we refer to \cite{tuomov-blockcp}.
A simple choice to satisfy \eqref{eq:balancing:stepcond} is to take for $\lambda=1$, some $\taub>0$, and small $\delta \in (0, 1)$,
\begin{equation}
    \label{eq:balancing:stepparam}
    \sigmab_1 = (1-\delta)/[2\taub L_1^2]
    \quad\text{and}\quad
    \sigmab_2 = (1-\delta)/[2\taub L_2^2].
\end{equation}
Notice how larger $\norm{K_j}$ will cause correspondingly smaller step length parameter $\sigmab_j$.
This way the method can balance between differing scales of the different blocks of the dual variable.

\begin{algorithm}[t!]
    \caption{Primal-dual proximal splitting with distinct step lengths for two dual blocks}
    \label{alg:alg-blockcp-fulldual}
    \begin{algorithmic}[1]
    \REQUIRE Convex, proper, lower semicontinuous $G: X \to \extR$, $F_1: Y_1 \to \extR$, $F_2: Y_2 \to \extR$ and linear operators $K_1 \in \linear(X; Y_1)$, $K_2 \in \linear(X; Y_2)$.
    \STATE Choose step length parameters $t, s_1, s_2>0$ satisfying \eqref{eq:balancing:stepcond} for some upper bounds $L_1 \ge \norm{K_1}$ and $L_2 \ge \norm{K_2}$ and $\lambda>0$.
    \STATE Choose initial iterates $x^0 \in X$, $y^0_1 \in Y_1$, $y^0_2 \in Y_2$.
    \FORALL{$i \ge 0$ \textbf{until} a stopping criterion is satisfied}
    \STATE $
        x^{i+1} \defeq \prox{\taub}{G}{
            x^i - \taub K_1^* y^i_1 - \taub K_2^* y^i_2
        }
    $
    \STATE
    $
        \bar x^{i+1}
        \defeq
        2x^{i+1}-x^i
    $
    \STATE
    $
        y^{i+1}_1 \defeq \prox{\sigmab_1}{F^*_1}{
            y^i_1 + \sigmab_1 K_1 \bar x^{i+1}
        }
    $
    \STATE
    $
        y^{i+1}_2 \defeq \prox{\sigmab_2}{F^*_2}{
            y^i_2 + \sigmab_2 K_2 \bar x^{i+1}
        }
    $
    \ENDFOR
    \end{algorithmic}
\end{algorithm}

The method has $O(1/N)$ convergence rate for an ergodic gap \cite{tuomov-blockcp}.
Since $F_2^*$ is strongly convex, it would also be possible to update the parameters $t,s_1,s_2>0$ on each iteration to accelerate the method to a mixed $O(1/N^2)+O(1/N)$ convergence rate for $y_2$ \cite{tuomov-blockcp}. %In the interests of simplicity, we have not done this here. However, the

%%%%%%%%%%%%%%%%%%%%%%%%%%%%%%%%%%%%%%%%%%%%%%%
\subsection{Relaxed Inexact Proximal Gauss--Newton}
\label{ssec:RIPGN}
%%%%%%%%%%%%%%%%%%%%%%%%%%%%%%%%%%%%%%%%%%%%%%%

We now explain how we will use \cref{alg:alg-blockcp-fulldual} to solve the sub-problems \eqref{eq:mintJk} for the RIPGN. We now assume that $F$ has the structure $F(x) = F_2(K_2x) = {F}(x) + \delta_V(x)$, $F_2$ is convex, proper and lower semicontinuous, $K_2$ is linear, and $\delta_V$ is the $\{1,\infty\}$-valued indicator function of a set $V \subset \R^N$. We will typically use $V$ to model positivity constraints.
We now formulate \eqref{eq:mintJk}, namely
\[
    \min_x \frac{1}{2}\norm{A_k(x)}^2 + F(x) + \frac{\beta}{2}\norm{x-\this z}^2
\]    
in the form \eqref{eq:blockproblem} by taking
$
    \label{eq:blockproblem:choices}
    F^k_1(y) = \frac{1}{2}\norm{y - b^k}^2
$, 
$
    K^k_1 = \nabla A(z^k)^*
$, and
$b^k = \nabla A(z^k)^*z^k - A(z^k)$
.
Furthermore, we place the proximal and the indicator term into
$
    G^k(x) = \delta_V(x) + \frac{\beta}{2}\norm{x-z^k}^2.
$
We added superscript $k$ to $F_1$, $K_1$, and $G$ to highlight that these terms depend on the outer iteration.
Now the linearised problem \eqref{eq:mintJk} can be written
\begin{equation}\label{eq:Jk}
    \arg\min_x  G^k(x) + F^k_1(K_1^k x) + F_2(K_2x).
\end{equation}
This has the form \eqref{eq:blockproblem} and can be solved with \cref{alg:alg-blockcp-fulldual} using step parameters \eqref{eq:balancing:stepparam}. 

Note that in \cref{thm:gn-convergence} we may consider $\delta_V$ as a part of $F$. However, from computational stand-point, it is usually more efficient to include it into $G$.

The whole process of solving \eqref{eq:minJ}, the relaxed inexact proximal Gauss--Newton method, is described in \cref{alg:gn-overrelax-detailed}. Here we would like to stress that $A(z)$ and $F(z)$ depend on the application. In the next section, we discuss specific choices of these functions in the case of electrical impedance tomography.

\begin{algorithm}[t]
    \caption{Relaxed inexact proximal Gauss--Newton for problem \eqref{eq:minJ}.}
    \label{alg:gn-overrelax-detailed}
    \begin{algorithmic}[1]
        \REQUIRE  Convex, proper, lower semicontinuous $F_2: \R^n \to \extR$, linear and bounded $K_2: \R^N \to \R^n$, convex $\zset \subset \R^N$, and $A \in C^1(\zset; \R^{M})$.
        \REQUIRE $w > 0$, $\delta \in (0,1)$, $t > 0$, and $\beta > 0$.
        \STATE Choose initial iterate $z^0$.
        \STATE $\sigmab_2 \defeq (1-\delta)/[2t \norm{K_2}^2]$
        \FORALL{$k \ge 0$ \textbf{until} a stopping criterion is satisfied}
            \STATE $K^k_1 \defeq \nabla A(z^k)^*$
            \STATE $b^k = \nabla A(z^k)^*z^k - A(z^k)$
            \STATE $\sigmab_1 \;\;\defeq (1-\delta)/[ 2t \norm{K^k_1}^2 ]  $
            \STATE Using \cref{alg:alg-blockcp-fulldual} with parameters $\taub$, $\sigmab_1$, $\sigmab_2$ and initial iterates $x^0 \defeq \zk$, $y_1^0 \defeq 0$, and $y_2^0 \defeq 0$, find an approximate solution $\tx=x^i$ (for large $i$) to \eqref{eq:Jk} 
            \STATE $\zkn \defeq \zk + w (\tx - \zk)$
        \ENDFOR
    \end{algorithmic}
\end{algorithm}

%%%%%%%%%%%%%%%%%%%%%%%%%%%%%%%%%%%%%%%%%%%%%%%
\section{Application to electrical impedance tomography}
\label{sec:app}
%%%%%%%%%%%%%%%%%%%%%%%%%%%%%%%%%%%%%%%%%%%%%%%

We give a brief review of the EIT forward model and its finite element (FE) approximation in a case where measurements consist of electric currents corresponding to a set of potential excitations. We treat the inverse conductivity problem of EIT as a regularized nonlinear least squares problem for which we describe three different regularization schemes.
In this section, as a deviation of the previous section, the unknown of interest is written $\sigma$ instead of $z$ or $x$ to be consistent with typical notation for electrical conductivity. 

\subsection{Forward model of EIT}
\label{ssec:FME}

Due to our measurement equipment, we derive the forward model of EIT in such way that it solves the current through each electrode, given the conductivity within the domain and potential at each electrode. More specifically, in each excitation, one of the electrodes on object's surface is set to a known electric potential, and the rest of the electrodes are connected to ground. Corresponding to each excitation, electric currents through all grounded electrodes are measured.

As the result of the FE approximation, we obtain a nonlinear operator $I(\sigma)$, which together measurement vector $I^m$ and an additional weight matrix $L_A$, forms the data fidelity term $A(\sigma)$ (see below). For details of the FE approximation, we refer to \cite{VossThesis2020}.

Given the electrical conductivity $\sigma$ within domain $\chiset$ and a potential $U^p_{k}$ at each electrode $e_{k}$ during excitation $p$, the forward problem of EIT is to solve the current $I^p_k$ through each electrode. This requires solving also the spatially distributed electric potential $u^p$ inside the domain.
The most accurate physically realizable way to model this is the
Complete Electrode Model (CEM) \cite{cheng1989electrode}. For existence and uniqueness of CEM see \cite{somersalo1992existence}.
With  $\chi = (\chi_1,\chi_2,\chi_3)$ the spatial coordinates within the domain $\Omega \subset \R^3$, CEM is described by a set of equations
\begin{subequations}\label{eq:CEM}
    \begin{alignat}{3}
        \nabla \cdot (\sigma(\chi) \nabla u^p(\chi)) &=0 (\chi\in \chiset),&\quad&
        &u^p(\chi) + \zeta_{k} \sigma \frac{\partial u^p(\chi)}{\partial \hat{n}}
        &= U^p_{k}\quad(\chi\in \partial \chiset_{e_{k}}), \\ %= \bigcup_{k=1}^L    e_{k}  \\
        \int_{\partial \chiset_{e_k}} \sigma \frac{\partial u^p(\chi)}{\partial
            \hat{n}}\: dS &= -I^p_{k},&\text{and}&
        &\sigma\frac{\partial u^p(\chi)}{\partial \hat{n}} &=0, \left(\chi\in \partial \chiset
        \setminus \bigcup_{k=1}^L
            \partial \chiset_{e_k}\right).
    \end{alignat}
\end{subequations}
where $\partial\chiset_{e_{k}}$ is the part of the $\partial\chiset$ covered by $k$'th electrode, $\zeta_k$ is contact impedance, $\hat{n}$ is the outward unit normal of $\chiset$, and $L$ is the number of electrodes. In addition, the currents $I^p_k$ are required to satisfy Kirchhoff's law
$
    \sum_{k=1}^L I^p_{k} = 0
$.
From here on, we assume the contact impedances to be known, $\zeta_k = 10^{-7}$ $\Omega$, as the actual contact impedances in the measurement setups used in this study are negligible.

In order to approximate the solution of the boundary value problem \eqref{eq:CEM} numerically, we utilize Galerkin finite element method (FEM). Following the scheme described in thesis \cite{VossThesis2020}, we write a variational form of the system \eqref{eq:CEM}. Moreover, we use a finite dimensional approximation of the electric potential $u$ as $u^p(\chi) = \sum_{j=1}^{N_u} u^p_j \phi_j(\chi)$ and write the vector of electrode currents for excitation $p$ as $I^p  = \sum_{j=1}^{L-1} \tilde{I}^p_jn_j$ to ensure that the Kirchhoff's current law is fulfilled. Here $\phi_j$ is a basis function for presenting the electric potential, and $n_j,\ j=1,\ldots,L-1$, are vectors that form a basis for the electrode currents. As in a typical Galerkin scheme, $\phi_j$ and $n_j$ are also used as test functions in the variational form. The FE approximation, i.e., the coefficient vector $\theta^p = (u^p_1,\ldots,u^p_N,\tilde{I}^p_1,\ldots,\tilde{I}^p_{L-1})$, is obtained as a solution of the linear system
\begin{equation}\label{eq:CEMfwd} 
    D \theta^p = \tilde{U}^p,\quad\text{where}\quad
    {D} = 
    \begin{pmatrix}
    D_1 & 0\\ D_2 & D_3
    \end{pmatrix} \in \R^{(N+L-1) \times (N+L-1)},
\end{equation}
and the elements of the blocks $D_1$, $D_2$ and $D_3$ are
\begin{align*}
         [D_1]_{ij} &= \int_{\chiset} \sigma(\chi) \nabla \phi_j(\chi) \cdot \nabla \phi_i(\chi) \:
        \mathrm{d}V + \sum_{k=1}^{L} \frac{1}{\zeta_{k}}
        \int_{e_{k}} \phi_j(\chi)\phi_i(\chi)\: \mathrm{d}S,\\
        [D_2]_{kj} &= -\sum_{k=1}^{L} \frac{1}{\zeta_{k}} \int_{e_{k}}
        \phi_j(\chi)(n_k)_{k}\: \mathrm{d}S = -\left( \frac{1}{\zeta_1} \int_{e_1} \phi_j(\chi)\:\mathrm{d}S
        - \frac{1}{\zeta_{k+1}} \int_{e_{k+1}} \phi_j(\chi)\:\mathrm{d}S \right) \\
    [D_3]_{kl} &=  \sum_{k=1}^{L} (n_l)_{k}(n_k)_{k} = 
    \left\{ \begin{array}{cc} 1, & k \neq l \\ 2, & k=l \end{array}
    \right.
\end{align*}
where $i,j=1,\ldots,N$; $j=1,\ldots,N$; and $k,l=1,\ldots,L-1$.
The vector $\tilde{U}^p$ is computed from the known electrode potentials as
\begin{equation} \label{eq:fem_b}
        [\tilde{U}^p]_i = \begin{cases}
        \sum_{k=1}^{L} \frac{U^p_{k}}{\zeta_{k}}
        \int_{e_{k}} \phi_i(\chi)\: \mathrm{d}S, & i=1,\ldots,N\\
        \frac{U^p_{i+1}}{\zeta_{i+1}} \vert e_{i+1}\vert
        -\frac{U^p_{1}}{\zeta_{1}} \vert e_{1} \vert, & i=N+1,\ldots,N+L-1.
               \end{cases}
\end{equation}
Note that the electrode currents $I^p$ are obtained from \eqref{eq:CEMfwd} by first solving the coefficient vector $\theta^p = D(\sigma)^{-1} \tilde{U}^p$ then multiplying $I^p = {\mathcal{K}}\theta^p$ where $\mathcal{K} \in \R^{L \times (N+L-1)}$, ${\mathcal{K}} = [0,\ldots,0,n_1, \ldots n_{L-1}]$. Now the operator $A$ can be written as
\[
    A(\sigma) = {L_A} \left(I(\sigma) - I^m \right),
\]
where $L_A$ arises from the factorization of the inverse noise covariance matrix (precision matrix) $W={L_A}^*{L_A}$ \cite{degroot2005optimal}, $I(\sigma)=(I(\sigma)^1,\ldots,I(\sigma)^L) \in \R^{L^2}$ is a vector containing currents from all excitations, and $I^m$ is the measurement vector corresponding to $I$.
For the linearisation, specifically the components used in \eqref{eq:Jk}, we have
$
    K_1^k = {L_A}\nabla I(\sigma^k)^*$ and $b^k = {L_A} \left(I^m + \nabla_\sigma I(\sigma^k)^*\sigma^k - I(\sigma^k)\right)$.

Finally, we also discretise the conductivity, setting $\sigma = \sum_{i=1}^{N} \sigma_i \varphi_i$, where $\varphi_i$ are linear basis functions. 
Note that $\tilde{U}^p$ is constant with respect to the factors $\sigma_i$, thus the partial derivatives $\tfrac{\partial I^p}{\partial \sigma_i}$ can be solved from
\begin{equation*}
0 = \frac{\partial \tilde{U}^p}{\partial \sigma_i} = \frac{\partial D\theta^p}{\partial \sigma_i} = \frac{\partial D}{\partial \sigma_i}\theta^p + D\frac{\partial \theta^p}{\partial \sigma_i} 
\iff \frac{\partial I^p}{\partial \sigma_i} = \frac{\partial {\mathcal{K}}\theta^p}{\partial \sigma_i} =   -{\mathcal{K}}D^{-1}  \frac{\partial D}{\partial \sigma_i}\theta^p. 
\end{equation*}
For further details on the computation of the Jacobian see \cref{app:jacobian_details}. 
\subsection{Regularization and constraints} %in EIT?
Next we introduce three different regularization schemes for EIT. We utilize these schemes in \cref{sec:numexs}. The first scheme comprises of smoothness-promoting $L^2$-regularization and a barrier function to approximate the positivity constraint. We use this scheme to compare the RIPGN against Newton's method. The other two schemes comprise of total variation (TV) with a positivity constraint, and smoothed TV with the barrier function. The latter is used to compare RIPGN against Newton's method in TV-regularized setting, and the smooth models against nonsmooth models. 
For a detailed description on how to compute the required proximal mappings for \cref{alg:alg-blockcp-fulldual} see \url{http://proximity-operator.net} and \cite{beck2017firstorder}. Additional  mappings are listed in \cref{app:alg_details}.

\subsubsection{Smoothness-promoting regularization with a barrier}
\label{sssec:SPR}

We take the first regulariser
\begin{equation*}
    {F}_\Gamma(\vec \sigma) \defeq \norm{R_\Gamma(\vec \sigma - \vec \sigma_m)}^2,
\end{equation*}
where $\vec \sigma_m$ is the expected value of $\vec \sigma$, and $\vec\sigma=(\sigma_1, \ldots, \sigma_N)$ is the vector of FE factors of $\sigma$. The matrix $R_\Gamma$ is defined by inverse factorization $\left( R_\Gamma^* R_\Gamma \right)^{-1}= \Gamma$ of a Gaussian kernel 
$
    \Gamma_{i,j} = ae^{- \frac{\norm{\chi_i-\chi_j}^2}{2b}}
$ \cite{lipponen2013electrical}.
Furthermore, we introduce a piecewise polynomial barrier function
\begin{equation*}
    {B_\mathrm{min}}(\sigma) \defeq \tfrac{1}{2}\norm{{L_{\mathrm{min}}}(\sigma)(\vec \sigma -  \sigma_\mathrm{min})}^2,
    \quad\text{with}\quad
        [L_\mathrm{min}]_{ij}(\sigma) \defeq 
        \left\lbrace 
        \begin{matrix*}[l]
            l_\mathrm{min},&\text{where } $i=j$ \text{ and } \sigma_i < \sigma_\mathrm{min}\\
            0,&\text{otherwise,}
        \end{matrix*}
        \right.     
\end{equation*}
where $l_\mathrm{min}$ is a coefficient that determines the strength of the barrier function. Now the convex component in \eqref{eq:minJ} is
$
    F(\sigma) = {F}_\Gamma(R_\Gamma \sigma) + {B_\mathrm{min}}(\sigma).
$
As $B_\mathrm{min}$ is diagonal, in the subproblems, it is computationally more efficient to include it into $G^k$. Thus, for formulating the two-block PDPS for the subproblems as in \cref{ssec:RIPGN}, we take
$
    F_2(y) = F_\Gamma(y)$, $K_2\sigma = R_\Gamma\vec \sigma$, and $ G^k(\sigma) = {B_\mathrm{min}}(\sigma) + \delta_\zset(\sigma) + \tfrac{\beta}{2}\norm{\sigma-\sigma^k}^2.
$

\subsubsection{TV regularization and nonsmooth constraints}
\label{sssec:NSR}
In the second scheme we apply nonsmooth total variation regularization with positivity constraints.
Since $\sigma$ is continuous by its finite element construction, its isotropic total variation (TV) \cite{rudin1992nonlinear} can be written as
\begin{equation*}
    \TV(\sigma) = \int_{\chiset} \left| \nabla \sigma(\chi) \right| dV,
\end{equation*}
where $|x| = \sqrt{x_1^2 + x_2^2 + x_3^2}$ is the Euclidean spatial norm.
In linear basis, the spatial gradient of $\sigma$ is constant within an element, meaning $\tfrac{\partial \sigma(\chi)}{\partial \chi_1} = ( \tfrac{\partial \sigma}{\partial \chi_1})_i$ if $\chi$ belongs to element $i$, and the integration yields
\begin{equation*}
    \TV(\sigma) = \sum_{i=1}^{N_E} V_{i} \sqrt{\left( \frac{\partial \sigma}{\partial \chi_1}\right)_i^2 + \left( \frac{\partial \sigma}{\partial \chi_2}\right)_i^2 + \left( \frac{\partial \sigma}{\partial \chi_3}\right)_i^2},
\end{equation*}
where $V_i$ is the volume of the i'th element and $N_E$ is the number of elements in FE basis. This can be expressed
\begin{equation*}
    \TV(\sigma) = \sum_{i=1}^{N_E} \sqrt{\left( R_1\vec \sigma \right)_i^2 + \left( R_2\vec \sigma \right)_i^2 + \left( R_3\vec \sigma \right)_i^2}
    =: \norm{R_\nabla \sigma}_{2,1},
\end{equation*}
where $R_\nabla\sigma \defeq \begin{bsmallmatrix} (R_1 \vec\sigma)^T & (R_2 \vec\sigma)^T & (R_3\vec\sigma)^T\end{bsmallmatrix}^T$ and the components $(i, j)$ of $R_l \in \R^{N_E \times N}$ for $l=1,2,3$ are computed from the basis functions $\varphi_j$ as
\begin{equation*}
    [R_l]_{ij} = \begin{cases}
        V_i \frac{\partial \varphi_j}{\partial\chi_l}, & \varphi_j \text{ when is non-zero in element $i$},\\
        0, & \text{otherwise.}
    \end{cases}
\end{equation*}
For formulating the two-block PDPS for the subproblems as in \cref{ssec:RIPGN}, we now take
$
    F_2(y) = \alpha \norm{y}_{2,1}$, $K_2 = R_\nabla$, and $G^k(\sigma) = \delta_\zset(\sigma) + \tfrac{\beta}{2}\norm{\sigma-\sigma^k}^2.
$

In some examples of \cref{sec:numexs}, we use TV regularization on two-dimensional domains. In those cases, the volume $V_i$ of the element $i$ is replaced by the element surface area and the spatial difference operators, $R_1$ and $R_2$, are computed from the two-dimensional basis functions. Operator $R_3$ is dropped. 

\subsubsection{Smoothed TV regularization and barrier function}
\label{sssec:SNSR}
As the last regularization scheme, we introduce a smoothed version of TV and semismooth barrier functions. The smoothed TV can be written as %$f: \R^N \to \R^{N_E}$
\begin{equation*}
    \tilde{TV}(\sigma)=  \norm{f(\sigma)}_1
    \quad\text{with}\quad
    [f(\sigma)]_i = \sqrt{ (R_1\vec \sigma)_i^2 + (R_2\vec \sigma)_i^2 + (R_3\vec \sigma)_i^2 + \gamma }.
\end{equation*}
Here, $\gamma$ is a smoothing parameter that we set to $\gamma = 10^{-7}$. We also introduce a maximum barrier $B_\text{max}(\sigma)$, by an obvious modification of the minimum barrier $B_\text{min}(\sigma)$ described above. Now the component $F$ in \eqref{eq:minJ} is
$
    F(\sigma) = \alpha \tilde{TV}(\sigma) +  B_\text{min}(\sigma) + B_\text{max}(\sigma),
$
and for the subproblems we have
$
    F_2(y) = \alpha \norm{y}_1$, $K_2(\sigma) = f(\sigma)$, and $G^k(\sigma) = {B_\mathrm{min}}(\sigma) + {B_\mathrm{max}}(\sigma) + \delta_\zset(\sigma) + \tfrac{\beta}{2}\norm{\sigma-\sigma^k}^2.
$
Note that with these notations, the operator $K$ in the subproblem \eqref{eq:mintJk} is nonlinear. 
Hence we solve it using a variant \cref{alg:alg-blockcp-fulldual} for nonlinear $K$ from \cite{tuomov-nlpdhgm,tuomov-nlpdhgm-block}.

%%%%%%%%%%%%%%%%%%%%%%%%%%%%%%%%%%%%%%%%%%%%%%%
\section{Numerical and experimental studies}
%%%%%%%%%%%%%%%%%%%%%%%%%%%%%%%%%%%%%%%%%%%%%%%
\label{sec:numexs}

We evaluate the proposed relaxed inexact proximal Gauss--Newton (RIPGN) method numerically in EIT image reconstruction. In the first set of numerical studies, Cases 1--3 (\cref{ssec:num2d}), we compare RIPGN against Newton's method and NL-PDPS in a circular 2D geometry and in Case 6 (\cref{ssec:num3d}), we demonstrate viability of RIPGN to three-dimensional EIT reconstruction. In Cases 4--5, \cref{ssec:exp2d}, we evaluate the performance of RIPGN with experimental data obtained through EIT-based sensing skin technique. The sensing skin is a surface sensor developed for structural health monitoring: In this technique, the structure is coated with conductive paint and the conductivity of the paint-layer is reconstructed using EIT. If the structure's surface breaks, for example, by cracking, it damages also the paint-layer, and this damage is detected by EIT \cite{Hallaji2014skin}.
We include further experiments in \cref{sec:morecases,sec:morecasestwo}.

\subsection{Computational aspects}
\label{ssec:caspects}
In the numerical studies, we evaluate the convergence of RIPGN (\cref{alg:gn-overrelax-detailed}) with multiple relaxation parameters $w$ and use static values for the parameters $\delta$, $\taub$, and $\beta$. We set $\delta$ to an arbitrary small value $\delta = 0.01$ to satisfy \eqref{eq:balancing:stepcond}, choose $\taub=10^{-6}$ by evaluating the convergence of the first subproblem of Case 3 with multiple step parameters (see \cref{ssec:paramselect}), and set $\beta$ to a small value $\beta = 10^{-10}$; in our experience, $\beta$ has similar impact on the convergence of the \cref{alg:gn-overrelax-detailed} as the relaxation parameter $w$. Every linearised subproblem is solved to 6000 iterations. 

We start the RIPGN, Newton, and NL-PDPS iterations from a homogeneous estimate $\sigma^1$. Furthermore, we introduce minimum and maximum constraints, $\zset_\text{min}$ and $\zset_\text{max}$, by defining the domain $\zset$ as a hypercube $\zset = \left\lbrace \sigma \in \R^N\; :\; \zset_\text{min} \le \sigma_i \le \zset_\text{max},\right.$ $\left. i=1,2,\,\dots,N  \right\rbrace $. \cref{tab:Parameters} shows the parameters that vary between the cases. Note that in this section, we denote the first index as $k=1$ instead of $k=0$.

\input{"Tables/Parameters".tex}

In synthetic tests, Cases 1--3 and 6, we compute the relative error of the estimated conductivity $\hz$ with respect to the true conductivity $\sigma_\text{true}$ as RE$=\norm{\hz-\sigma_\text{true}}/\norm{\sigma_\text{true}} \cdot 100 \%$. Note, however, that due to the simulated measurement noise and the modeling errors caused by the differing mesh sparsities, the true conductivity is often quite far from the actual minimum of the objective function. To highlight this, we compute the objective function at the true conductivity by evaluating the true conductivity at the nodes of the mesh we use in the forward solution. We also compute the relative error of this interpolation, to assess how well the original conductivity could be presented in the forward solution mesh.

We perform all computations in MATLAB 2017b with dual Intel Xeon E5649 @ 2.53/2.93 GHz CPUs and with $99$ GB RAM (1333 Mhz ECC DDR3). We implement crucial components of the construction of the matrix $D$ and the Jacobian $\nabla A$ in C++. We compute the forward solution \eqref{eq:CEMfwd}, the equation $\tilde{\mathcal{K}}\mathcal{D}^{-1}$ for the Jacobian and the linear system for Newton's method through LU decomposition using UMFPACK \cite{Davis:2004:AUV:992200.992206}. In Case 6, we compute the forward solution using BiCGSTAB. 

To catch the stagnation of the RIPGN and Newton's method, we initially stop the iteration if an iterate $z^k$ decreases the value of the objective function less than $0.5$, i.e., if $J(\sigma^k) - J(\sigma^{k-1}) < 0.5$. However, in order to ensure that the iteration does not end prematurely, we compute additional two iterates to check if one of those decreases the objective function by at least $0.5$. If they do, we continue the iteration normally, and if not, we discard these two iterates and take the initial stopping iterate as the estimated solution. We employ this stagnation check after eighth iteration to ensure that at least 10 iterations are computed. For NL-PDPS, we extend these conditions to 700 and 300, respectively. We note that, as in many previous EIT studies \cite{vauhkonen2004image}, line search is used in Newton's method, as the method did not converge within reasonable time with a constant step parameter.

\subsection{Numerical 2D EIT studies}
\label{ssec:num2d}
In Cases 1--3, the geometry of the domain $\Omega$ resembles shallow water tank. The diameter of the tank is 24 cm and the height is 7 cm. Furthermore, the tank has sixteen evenly placed electrodes on the surface; the width and height of the electrodes are 2.5 cm and 7 cm, respectively.

The conductivity inside the tank is constant along the vertical axis, and hence, although the EIT forward model is three-dimensional, the conductivity is two-dimensionally distributed. In the forward model, we map the 2D conductivity to 3D by linear interpolation.

When simulating the measurement data, we present the electrical conductivity in a piecewise linear basis using a tetrahedral mesh consisting 84052 nodes and we approximate the electric potential in a second order polynomial basis consisting 629513 nodes. In the reconstruction, we approximate the 2D conductivity in a piecewise linear basis with  triangular 2D mesh of 1117 nodes; for the forward solver, we map this 2D distribution to piecewise linear 3D distribution (tetrahedral mesh consisting 8189 nodes). Furthermore, we approximate the electric potential with second order polynomial basis functions in a mesh with 56986 nodes.

To simulate actual measurements more realistically, we add Gaussian distributed noise, with std of $0.005\left|I_i\right|$, to each simulated measurement $I_i$.

\subsubsection{Case 1: Smoothness-promoting regularization \& Newton's method}

We first evaluate the RIPGN against Newton's method on a smooth optimization problem. We use the smoothness promoting regularization (Scheme 1; \cref{sssec:SPR}). Furthermore, to match the regularization, the true conductivity is also smooth (\cref{fig:WT_Sim1_Recos}, left): We generate the true conductivity by drawing a sample from a multivariate Gaussian distribution expressing spatial smoothness. This distribution is of the form described in \cref{sssec:SPR}, and its expectation as well as the parameters of the covariance matrix are chosen to be same as in the model used in regularization. We note, however, that since the FE mesh used in inversion is sparser than that in the data simulation, the true conductivity is a not a realization from a model that corresponds to the regularizing function.

\crefname{figure}{Figure}{Figures} \cref{fig:WT_Sim1_J} shows the value of the objective function as a function of iteration number $k$ and computational time $t$ for RIPGN method corresponding to five relaxation parameters $w$ and for the Newton's method. \cref{table:WTSim1} lists the number of iterations required for convergence, value of the objective function at the last iterate, computational time and relative error corresponding to each of these estimates. \cref{fig:WT_Sim1_Recos} illustrates the reconstructed images.
\input{"Figures/Layout/Case1".tex}
\input{"Tables/WT_Sim1".tex}  

\Cref{fig:WT_Sim1_J} and \cref{table:WTSim1} show that in Case 1, Newton's method and RIPGN with $w\leq 3/4$ converge. The reconstructions have small relative errors, as shown by \cref{table:WTSim1}. Smaller relaxation parameters result in increased number of iterations, which in turn increases the computational times, as expected. RIPGN with $w=3/4$ converges in around 7 minutes, while Newton's method converges in about same amount of iterations, but the computation of each iterate is considerably longer, taking around 37 minutes to converge. Hence, although subproblems are solved exactly in Newton’s method, we need the same amount of iterations for convergence as with RIPGN, which solves subproblems inexactly. Longer computational times with Newton's method are mostly due to the line search method.

\Cref{fig:WT_Sim1_Recos} shows that the reconstruction from converging iterations are visually very close to the true conductivity. With step parameters $w = 9/10$ and $w=1$, the RIPGN reconstructions diverge.
Convergence, indeed, cannot be expected for relaxation parameters $w \approx 1$ due to the bound \eqref{eq:gn-convergence:wbound} in \cref{thm:gn-convergence}.

As mentioned in \cref{ssec:caspects}, we also evaluate the objective function at the true conductivity. This gives $J(\sigma_\mathrm{true}) = 1.2057 \cdot 10^5$ and a $0.9294\%$ relative error, meaning that although the true conductivity can be presented quite accurately in the forward solution mesh, the best presentation is very likely far off from the actual minimum of the objective function.  
\FloatBarrierA
\subsubsection{Case 2: Smoothed TV regularization \& comparison with Newton's method}
Because standard Newton’s method cannot be used on non-smooth problems (such as those induced by regularization Scheme 2, \cref{sssec:NSR}), in Case 2, we compare RIPGN to Newton’s method in Scheme 3 (\cref{sssec:SNSR}); a smoothed version of Scheme 2.
In Case 2, the true target contains a circular inclusion of low conductivity ($10^{-3}$ S/m) on a constant background with conductivity of 0.028 S/m. 

\cref{fig:WT_Sim2_STV_J} and \cref{table:WTSim2STV} show that in Case 2, Newton's method takes around 44 minutes to converge while RIPGN with relaxation parameter $w=3/4$ and $w=9/10$ takes around 5--6 minutes. RIPGN diverges again with relaxation parameter $w=1$. The relative errors in Case 2 are larger than in Case 1. This is expected, as the conductivity in Case 1 was a draw from a distribution with statistical properties that corresponded to the regularization that was used. These  errors are further increased as the smooth shapes in Case 1 tend to be more accurately representable with linear interpolation than sharp-edged inclusion in Case 2. The reconstructed images (\cref{fig:WT_Sim2_STV_Recos}) are, however, fairly accurate. Evaluating the objective function at the true conductivity gives $J(\sigma_\mathrm{true}) = 8.6491\cdot 10^4$ with $4.6023\%$ relative error.

\input{"Figures/Layout/Case2".tex}
\input{"Tables/WT_Sim2_STV".tex}
\FloatBarrierA%
\subsubsection{Case 3: TV regularization \& comparison with NL-PDPS} 
\label{sssec:C3}    
In Case 3, we compare RIPGN with NL-PDPS \cite{tuomov-nlpdhgm}. We use the nonsmooth regularization (Scheme 2; \cref{sssec:NSR}).
The target conductivity in Case 3 is the same as in Case 2. 

\Cref{fig:WT_Sim2_TV_Recos} shows no visual differences between the reconstruction computed with RIPGN ($w < 1$) and the reconstruction computed with NL-PDPS. However, \cref{fig:WT_Sim2_TV_J} and \cref{table:WTSim2TV} show that NL-PDPS takes over a week and a half to solve the problem with the desired accuracy, while RIPGN (with $w=3/4$ or $w=9/10$) takes less than 6 minutes. It should be noted though that the total amount of iterations, including the 6000 in each RIPGN linearisation, is considerably fewer with NL-PDPS. This is consistent with earlier studies \cite{tuomov-nlpdhgm,tuomov-nlpdhgm-redo}.

Finally, \cref{fig:WT_Sim2_TV_Recos} and \cref{table:WTSim2TV} show that the unsmoothed total variation slightly improves the reconstruction quality and the relative error from Case 2 (cf. \cref{fig:WT_Sim2_STV_Recos} and \cref{table:WTSim2STV}).

\crefname{figure}{Figure}{Figures}
\input{"Figures/Layout/Case3".tex}
\input{"Tables/WT_Sim2_TV".tex}
\FloatBarrierA
\subsubsection{Effects of the smoothed TV}
\label{ssec:effectsstv}
Next we compare the solutions of the smoothed TV scheme to those of the (nonsmooth) TV scheme. Although the differences between the reconstructions in \cref{fig:WT_Sim2_STV_Recos} and \cref{fig:WT_Sim2_TV_Recos} appear small, closer inspection reveals these to be fundamental. \cref{fig:Sim2_Profiles} shows the true conductivity and three profiles of the true conductivity that are taken along the dashed line. The \cref{fig:Sim2_Profiles} also shows profiles from the solutions computed using Newton's method, RIPGN with smoothed TV and RIPGN with TV. 

\begin{figure}[!t]%
    \centering%
    \includegraphics[height=0.28\textheight]{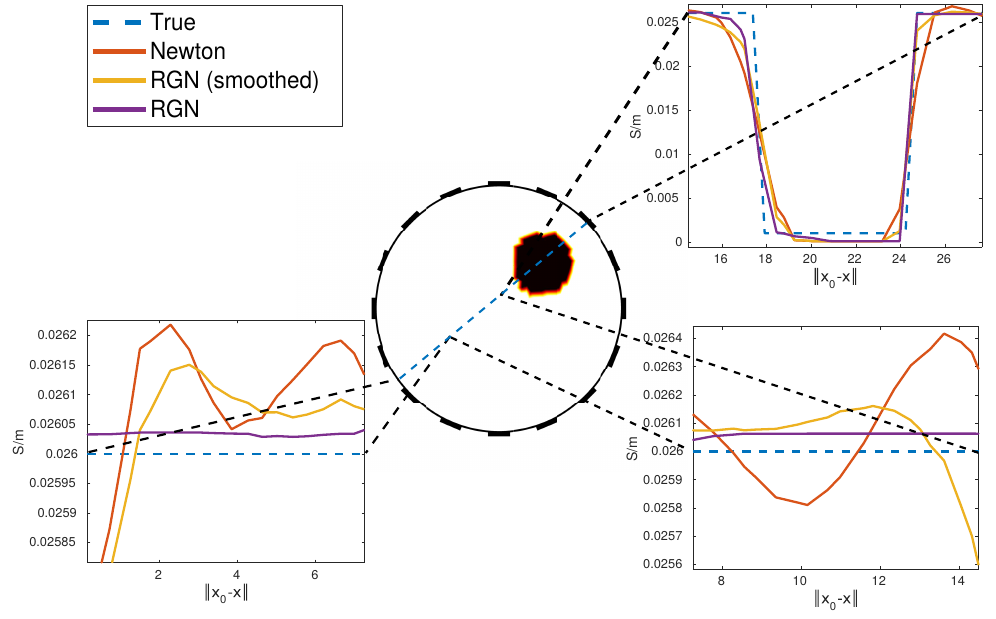}
    \caption{The differences between the smoothed and unsmoothed total variation are distinguishable on closer inspection. Conductivity profile is highlighted with a dashed blue line. Same profile is also taken from smooth Newton and RIPGN reconstructions and nonsmooth RIPGN reconstruction.}%
    \label{fig:Sim2_Profiles}%
\end{figure}%    
\begin{figure}[!tbp]%    	
    \centering%
    \subfloat{\includegraphics[width=\figsizeJ]{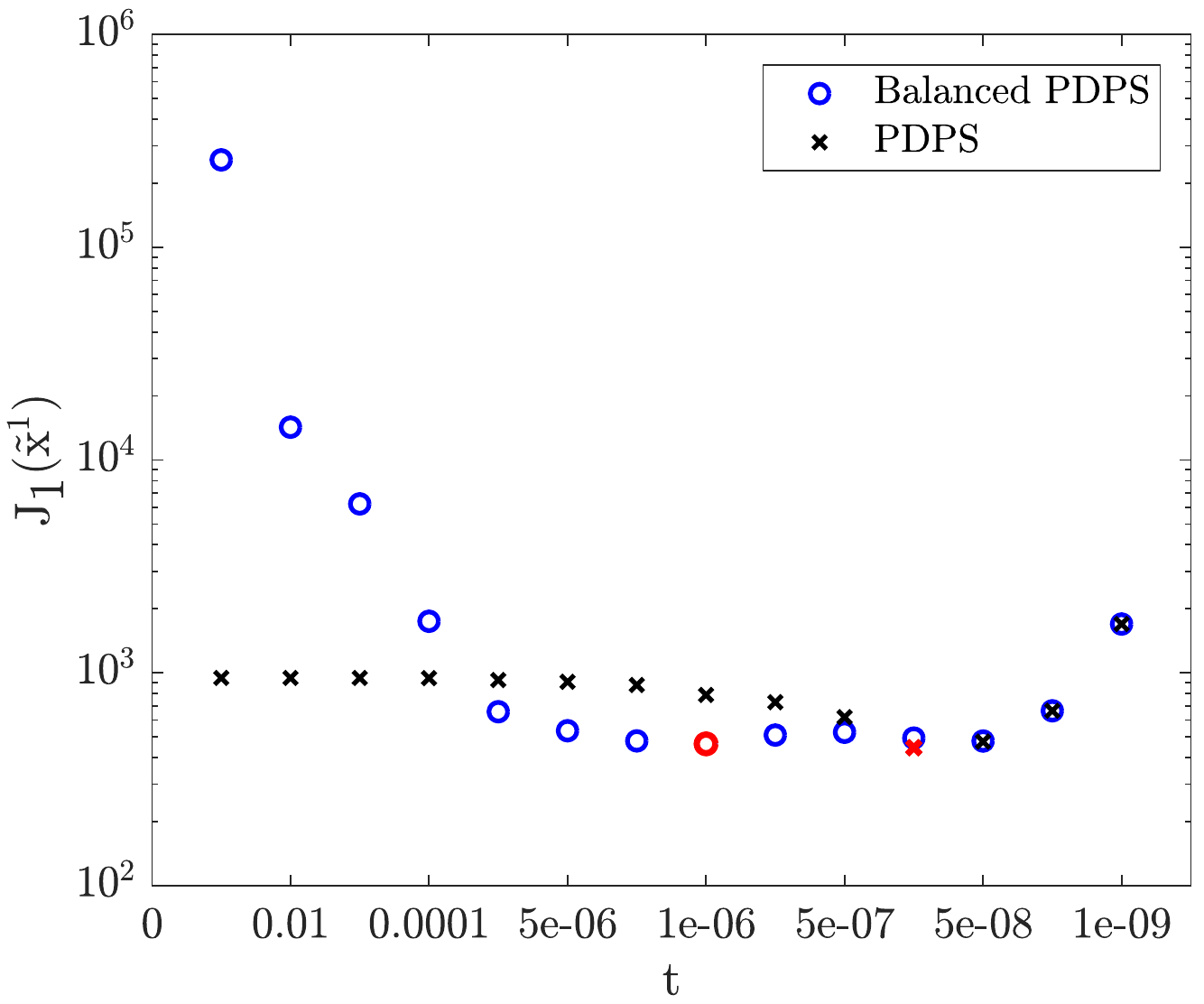} }%
    \subfloat{\includegraphics[width=\figsizeJc]{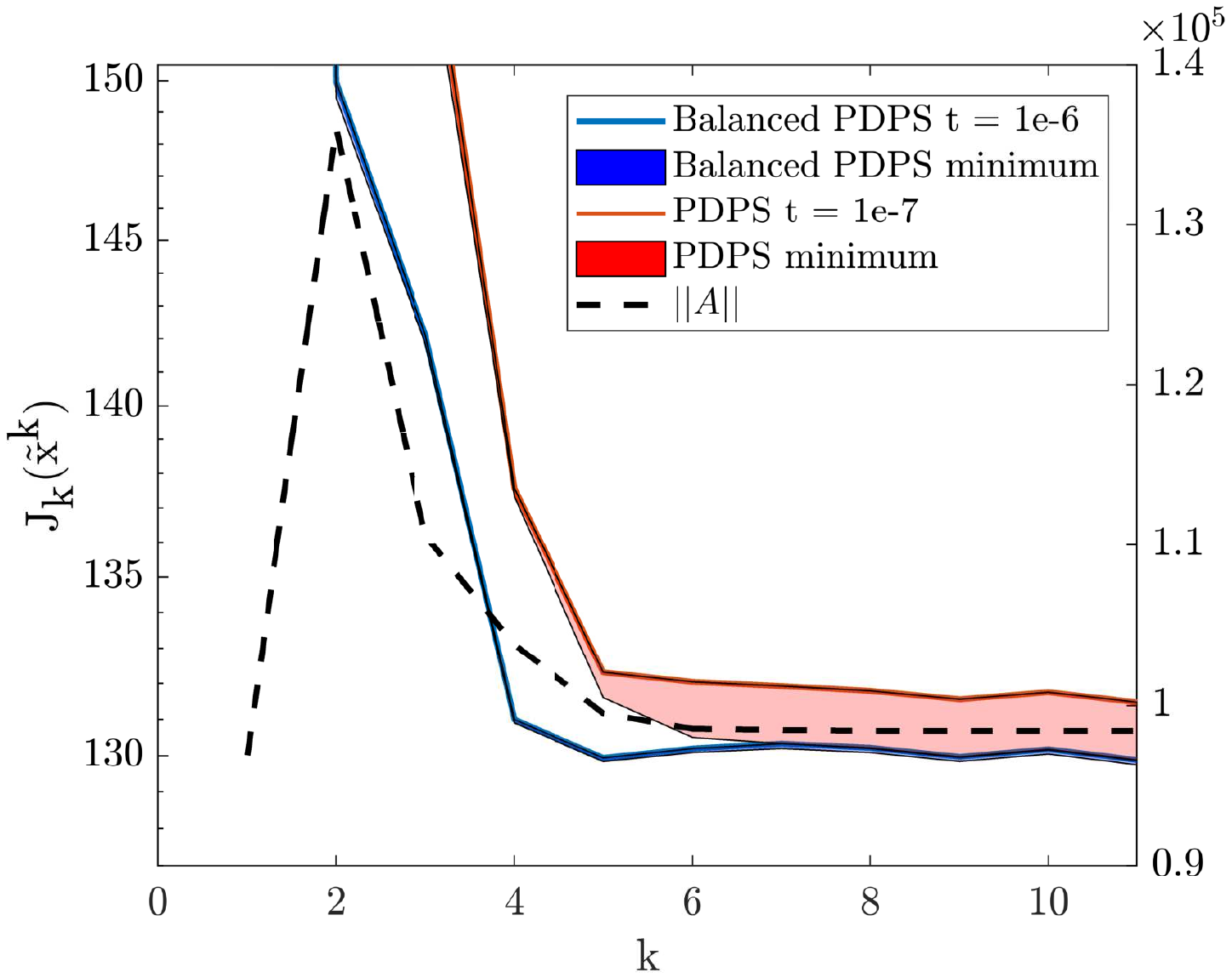}}%
    \caption{Left: Value of the objective function in the first linearised problem at the minimum point estimate $\hx$ as a function of step parameter $t$. The step parameters $t=10^{-6}$ and $t=10^{-7}$ are highlighted in red. Right: Value of the objective function of linearised problem $k$ at the $\hx^k$ with $t=10^{-6}$ for the balanced algorithm and $t=10^{-7}$ non-balanced. Area around the curves highlight the minimum value with any $t$. The dashed line represents the operator norm of $\nabla A$.}%
    \label{fig:BPDPSvsPDPS}%
\end{figure}%    
The profiles in \cref{fig:Sim2_Profiles} illustrate that the solution corresponding to smoothed TV is spatially smoother than that corresponding to non-smoothed TV---the former fails to track the sharp edges in the conductivity. We remind that all solutions are actually piecewise linear due to the choice of basis functions. %There is also a considerable contrast loss at the high conductivity inclusion in all reconstructions. This contrast loss is typical for both TV-regularisation \cite{strong2003edge} and for EIT \cite{karhunen2010electrical}.
\subsubsection{Subproblem parameter selection and balancing}
\label{ssec:paramselect}

In Cases 1--3, we used step parameter $\taub=10^{-6}$ in the linear solver. We chose this step parameter by evaluating the rate of convergence of the first subproblem in Case 3 with multiple step parameters $t$, and then selecting the one that converges fastest.  \cref{fig:BPDPSvsPDPS} (left) shows the value of the objective function at the approximative solution $J_1(\tilde x^1)$ after 6000 iterations. Furthermore, to illustrate the differences between the balanced and the non-balanced method, the figure shows the value of $J_1(\tilde x^1)$ when the problem is solved without balancing, i.e., with $s_1 = s_2 = (tL^2)^{-1}$.

On the right in \cref{fig:BPDPSvsPDPS}, solid lines indicate the value of $J_k(\tilde x^k)$ when the problem is solved using both the balanced and the non-balanced methods with step parameters $\taub = 10^{-6}$ and $\taub = 10^{-7}$ respectively. Areas below the curves show the minimum with any of the tested parameters. For this experiment, the outer iteration is advanced with relaxation parameter $w=3/4$ using solutions from the balanced method with $\taub = 10^{-6}$. For the curiosity, the operator norm of $\nabla A$ is also shown in the figure.

\cref{fig:BPDPSvsPDPS} shows that although both methods converge almost equally in the first subproblem, the balanced method outperforms normal PDPS in the subsequent problems. Furthermore, \cref{fig:BPDPSvsPDPS} shows that 
unlike with the non-balanced PDPS, in the balanced PDPS, the optimal step parameter remains almost unchanged at every linearisation.\FloatBarrierA%
\subsection{Experimental studies}
\label{ssec:exp2d}
The measurement device we use in the experimental studies is manufactured by Rocsole Ltd. (\url{www.rocsole.com}). This device utilizes a typical ECT measurement principle in which each electrode is sequentially set to a known sinusoidal potential, while the others remain grounded. The currents induced by the potential differences are then sampled, in this case with $1$ MHz sampling frequency, and the amplitude of the induced current is computed from the samples using discrete Fourier transform. Here the excitation frequency is set to $39$ kHz and measurements used in the reconstruction are time averages of the computed amplitudes over one minute time period. 

\subsubsection{Cases 4-5: Sensing skin \& crack detection}

In Case 4, we test RIPGN in a crack detection problem arising from EIT-based sensing skins (see \cite{Hallaji2014skin}). 
Computationally this crack detection problem differs from the inclusion detection in a typical water tank geometry, because cracks cause sharp but spatially narrow inclusions of low conductivity on the background conductivity of the paint layer. Furthermore, the conductive paint is far from being homogeneous in thickness and consequently, the background conductivity is inhomogeneous. To take into account this inhomogeneity we follow an approximative data correction approach proposed in \cite{Hallaji2014skin}. In addition, we exploit the fact that the cracks never increase the conductivity, allowing us to constrain the conductivity from above. 

The sensing skin used in the study is painted with Kontakt Chemie EMI 35 conductive graphite paint onto a rectangular plexiglass. The side lengths of the plexiglass are 44 cm and 42 cm and each side has seven 2.5 cm $\times$ 1.25 cm electrodes. Furthermore, four 2.5 cm $\times$ 2.5 cm electrodes are placed in the middle of the sensing skin.

From the sensing skin measurements, we compute a smoothed TV solution with Newton's method and RIPGN (Case 4), and a nonsmooth TV solutions with RIPGN (Case 5). The triangular mesh used in the computations has 3147 nodes for the conductivity represented in a piecewise linear basis and 12281 nodes for the electric potential in second order basis. Parameters used in these cases are shown in \cref{tab:Parameters}.

\input{"Figures/Layout/Case4".tex}
\input{"Tables/Skin_Meas1_STV".tex}

\cref{fig:Skin_Meas1_STV_Recos} (left) shows a photograph of the sensing skin in Case 4. The crack in the photograph is highlighted in red as the crack is very narrow. 

\cref{fig:Skin_Meas1_STV_J} shows that for every relaxation parameter RIPGN converges considerably better than with $w=1$ in Cases 1--3. However, the value of the objective function oscillates slightly over the last few iterations when $w> 1/2$. The better convergence with relaxation parameter $w\le 1/2$ is also confirmed by \cref{table:SkinMeas1STV}. The objective function with Newton's method converges to similar values as RIPGN with the larger step parameters. Note that in this case, the iteration time with Newton's method is considerably shorter than in Cases 1--3 due to the two-dimensional forward model. Furthermore, \cref{fig:Skin_Meas1_STV_Recos} shows that the reconstructed images capture the shape and length of the crack rather well. In this example, the effect of relaxation parameter to the quality of RIPGN-based reconstruction is very small, and even the difference between the RIPGN- and Newton- based reconstructions is somewhat negligible. We note, again, that the choices of the optimization method and relaxation parameter do have an effect on the converge and computation speed, as shown by \cref{table:SkinMeas1STV}.

In Case 5, the sensing skin dataset used in Cases 4 is used to reconstruct TV regularized solution (Scheme 2) with RIPGN. The results are shown in\crefname{figure}{Figures}{Figures} \cref{fig:Skin_Meas1_TV_J}--\ref{fig:Skin_Meas1_TV_Recos} and \cref{table:SkinMeas1TV}.
\crefname{figure}{Figure}{Figures}
Comparing these results with results in Case 4 shows that the contrast between the crack and the background conductivities is higher when the non-smooth model is used (Scheme 2). Again, the computational times are shorter than in the smoothed case (see \cref{sssec:C3}). Apart from these differences, the results are fairly similar to smoothed TV.    
\input{"Figures/Layout/Case5".tex}
\input{"Tables/Skin_Meas1_TV".tex}

\subsection{Numerical 3D EIT study}
\label{ssec:num3d}
In Case 6, we evaluate the feasibility of RIPGN to three-dimensional EIT. The geometry resembles a cylinder that has a radius of 14 cm and a height of 26 cm. Furthermore, the cylinder has four horizontal layers of electrodes on the surface. Each layer contains 10 evenly placed square electrodes with side length of 3 cm. The gap between each electrode layer is 4 cm. The cylinder contains a resistive inclusion with conductivity of $10^{-3}$ S/m on a background conductivity of 0.028 S/m.

In the data simulation we present the electrical conductivity in a piecewise linear basis with 210860 nodes, and the electric potential in a second order polynomial basis with 1632276 nodes. Furthermore, the inversion mesh has 18835 nodes for the conductivity and 135504 nodes for the potential. The reconstructions are computed with Scheme 3 (\cref{sssec:NSR}). 

\cref{fig:WT3D_Meas1_View1_Recos} shows that in Case 6 the relaxation parameter has negligible impact on the reconstruction quality and the reconstructions look very similar to the true conductivity distribution. \cref{fig:WT3D_Sim1_J} and \cref{table:WT3DSim1} show that, even in terms of the final value of the objective function, RIPGN converges similarly with every step parameter. Clearly, in this case we get no benefits for lowering the step parameter as lowering it only increases the amount of iterations required to satisfy the convergence criteria; with step parameter $w=1/4$ it takes 47 iterations, while with $w=1$ it takes only 9. This is also reflected in the computational times. Furthermore, these computational times are considerably longer compared to the previous cases as number of nodes, elements, and electrodes in the model are greater.  
As in the previous synthetic cases, the true conductivity is known and evaluating the objective function at $\sigma_\mathrm{true}$ yields $J(\sigma_\mathrm{true}) = 7.0220 \cdot 10^4$. Furthermore, the relative error is RE $= 1.250$\%.
\input{"Figures/Layout/Case6".tex}
\input{"Tables/WT3D_Sim1".tex}

\section{Conclusions}

We proposed a novel relaxed inexact proximal Gauss–Newton (RIPGN) method, and studied it both theoretically and numerically. We applied the method to image reconstruction from electrical impedance tomography (EIT) measurements which is a large-scale non-linear inverse problem governed by a PDE model.

We showed that the RIPGN converges to a disjoint set of Clarke critical points under conditions that hold for typical inverse problems. Furthermore, we presented a framework for the application of RIPGN to such problems. We confirmed the efficacy of the RIPGN on synthetic and experimental EIT data. These studies showed that by adjusting the relaxation parameter $w$, the iterates generated by the RIPGN converge to solutions that meaningful for EIT applications. Furthermore, when $w$ was appropriately selected, the RIPGN estimates were significantly faster to compute than more conventional estimates produced by Newton's method in the smooth case and the NL-PDPS in the nonsmooth case.

Overall, RIPGN combined with (NL-)PDPS offers a flexible framework to solve various nonconvex and nonsmooth problems. In EIT, the greatest advantage of the method was achieved with nonsmooth TV regularization. Following the implementation of this work, RIPGN method can be straightforwardly adopted also to a variety of other  optimization problems---those associated with other non-smooth regularization schemes as well as other imaging/reconstruction applications yielding non-convex optimization problems. In the future, this may enable handling such large-scale problems without need for smoothing and/or reducing the model complexity, which both can lead to loss of contrast and appearance of imaging artefacts. Moreover, the RIPGN might even enable---via computational speed-up---realizations of high-contrast real-time imaging in some applications.

%%%%%%%%%%%%%%%%%%%%%%%%%%%%%%%%%%%%%%%%%%%%%%%
\section*{Acknowledgments}
%%%%%%%%%%%%%%%%%%%%%%%%%%%%%%%%%%%%%%%%%%%%%%%

This project has received funding from the European Union`s Horizon 2020 research and innovation programme under grant agreement No 764810. The research was also funded by the Academy of Finland (Centre of Excellence of Inverse Modelling and Imaging, 2018-2025, project 303801).

T.~Valkonen has been supported by Academy of Finland grants 314701 and 320022 as well as Escuela Politécnica Nacional internal grant PIJ-18-03.
    
\appendix
%%%%%%%%%%%%%%%%%%%%%%%%%%%%%%%%%%%%%%%%%%%%%%%
\section{Geometric justification for zero proximal parameter}
\label{app:zerobeta}
%%%%%%%%%%%%%%%%%%%%%%%%%%%%%%%%%%%%%%%%%%%%%%%

We now improve \cref{thm:gn-convergence} by showing that we can take the proximal parameter $\beta=0$ provided $\this e$ is small enough and a critical point satisfies certain geometric conditions. We will then also obtain local convergence to this specific critical point.
The rough plan of work is to show that \eqref{eq:j-first-estim} holds under these conditions for some $\beta>0$ despite the algorithm employing $\beta=0$.
Throughout, we take $J$ as in \eqref{eq:minJ} and for brevity write
\[
    G(x) \defeq \frac{1}{2}\norm{A(x)}^2
    \quad\text{and}\quad
    G_k(x) \defeq \frac{1}{2}\norm{A_k(x)}^2 = \frac{1}{2}\norm{A(z^k) + \grad A(z^k)(x-z^k)}^2.
\]

We will for some $\rho>0$ on \cref{step:gn-overrelax-e} of \cref{alg:gn-overrelax}, 
\begin{equation}
    \label{eq:beta=0:alg-mod}
     \text{solve \eqref{eq:minJk} for $\this{\tilde x}$ to such accuracy that $\norm{\this e} \le \rho \norm{\this{\tilde x}-\this z}$ for some $e^k \in \subdiff J_k(\this{\tilde x})$.}
\end{equation}

\begin{lemma}
    \label{lemma:beta=0}
    Suppose \cref{ass:asecond} holds. In \cref{alg:gn-overrelax} use \eqref{eq:beta=0:alg-mod}.
    If $\this q \defeq \this e -\grad G_k(\this{\tilde x}) \in \subdiff F(\this{\tilde x})$satisfies
    \[
        F(\thisz)-F(\this{\tilde x}) \ge \iprod{\this q}{\thisz-\this{\tilde x}} + \frac{1}{2}\norm{z-\this{\tilde x}}_{\Gamma_k}^2
    \]
    for some operator $\Gamma_k$ such that $\grad A(\this z)\grad A(\this z)^*+\Gamma_k \ge (2\rho+\beta) I$ for some $\beta>0$, then \eqref{eq:j-first-estim} holds.
    If \eqref{eq:gn-convergence:wbound} holds for this $\beta$, then the conclusions of \cref{thm:gn-convergence} hold.
\end{lemma}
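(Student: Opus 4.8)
The plan is to re-derive the descent inequality \eqref{eq:j-first-estim} --- with the role of the proximal parameter now played by the $\beta>0$ supplied by the geometric hypothesis rather than by an explicit proximal term --- and then to note that, once \eqref{eq:j-first-estim} is in hand, the proof of \cref{thm:gn-convergence} uses $\beta$ nowhere else essentially, so that its conclusions carry over.

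For the first step I would begin from the identity $A_k(\this z)=A(\this z)$, which gives $G_k(\this z)=G(\this z)$ and hence $J(\this z)=J_k(\this z)$; it thus suffices to lower-bound $J_k(\this z)-J_k(\this{\tilde x})=[G_k(\this z)-G_k(\this{\tilde x})]+[F(\this z)-F(\this{\tilde x})]$. Since $G_k$ is quadratic with constant Hessian $H_k\defeq\grad A(\this z)\grad A(\this z)^*$, the first bracket equals \emph{exactly} $\iprod{\grad G_k(\this{\tilde x})}{\this z-\this{\tilde x}}+\tfrac12\norm{\this z-\this{\tilde x}}_{H_k}^2$, while to the second bracket I would apply the geometric hypothesis on $\this q\in\subdiff F(\this{\tilde x})$. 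Adding the two and using $\grad G_k(\this{\tilde x})+\this q=\this e$ (which holds because $\this e\in\subdiff J_k(\this{\tilde x})$ and $\this q\defeq\this e-\grad G_k(\this{\tilde x})$), the linear terms collect into $\iprod{\this e}{\this z-\this{\tilde x}}$ and the quadratic terms into $\tfrac12\norm{\this z-\this{\tilde x}}_{H_k+\Gamma_k}^2$. Then Cauchy--Schwarz together with \eqref{eq:beta=0:alg-mod} give $\iprod{\this e}{\this z-\this{\tilde x}}\ge-\norm{\this e}\norm{\this z-\this{\tilde x}}\ge-\rho\norm{\this z-\this{\tilde x}}^2$, and $H_k+\Gamma_k\ge(2\rho+\beta)I$ gives $\tfrac12\norm{\this z-\this{\tilde x}}_{H_k+\Gamma_k}^2\ge\tfrac{2\rho+\beta}{2}\norm{\this z-\this{\tilde x}}^2$; the $\rho$-contributions cancel and leave $J(\this z)-J_k(\this{\tilde x})\ge\tfrac{\beta}{2}\norm{\this{\tilde x}-\this z}^2$, which is \eqref{eq:j-first-estim} (its middle expression being an identity if one formally posits the proximal term with this $\beta$, even though the algorithm never uses it). The degenerate case $\this{\tilde x}=\this z$ forces $\this e=0\in\subdiff J_k(\this z)$, hence $\this z$ Clarke-critical, exactly as at the start of the proof of \cref{thm:gn-convergence}.

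For the second step I would run through the proof of \cref{thm:gn-convergence} reading $\tilde J_k$ as $J_k$ everywhere (the algorithm has $\beta=0$). The second inexactness condition of that theorem --- that $J_k(\this z)\ge J_k(\this{\tilde x})$, or else $\this{\tilde x}=\this z$ is critical --- is now a \emph{consequence} of the inequality just proved together with $J(\this z)=J_k(\this z)$; and \eqref{eq:beta=0:alg-mod} upgrades to $\this e\to0$ once the telescoped decrease in \eqref{eq:jdecrease} yields $\this z-\this{\tilde x}\to0$. Every remaining ingredient --- the computation $\norm{\zkn-\zk}=w\norm{\this{\tilde x}-\this z}\le w r\le\dd$ with $r=\sqrt{2\inv\beta(J(z^0)-\inf F)}$, the linearization estimates \eqref{eq:aest}--\eqref{eq:aest2}, the telescoping \eqref{eq:jdecrease}, and the accumulation-point and disjoint-component arguments --- invokes $\beta$ only through \eqref{eq:j-first-estim} and through $r$, both of which hold for the present $\beta$ under \eqref{eq:gn-convergence:wbound}. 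I expect the only delicate point to be the bookkeeping that collapses $\grad G_k(\this{\tilde x})+\this q$ to $\this e$ so that the inner products telescope, together with the observation that $H_k+\Gamma_k\ge(2\rho+\beta)I$ must be used as a genuine operator inequality (in the direction $\this z-\this{\tilde x}$) and not merely as a scalar bound.
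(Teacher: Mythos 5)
Your argument is correct and follows essentially the same route as the paper: the exact quadratic Taylor expansion of $G_k$ at $\this{\tilde x}$ is precisely the paper's ``expand and simplify'' step yielding $\frac12\norm{\grad A(\this z)^*(\this{\tilde x}-\this z)}^2$, and combining it with the strong-subdifferential inequality for $F$, Cauchy--Schwarz with $\norm{\this e}\le\rho\norm{\this{\tilde x}-\this z}$, and the operator bound $H_k+\Gamma_k\ge(2\rho+\beta)\Id$ gives \eqref{eq:j-first-estim} exactly as in the paper's proof. Your second paragraph, checking that the hypotheses of \cref{thm:gn-convergence} (monotone decrease of $J_k$, $\this e\to 0$ a posteriori from the telescoped decrease) are recovered, only makes explicit what the paper leaves implicit.
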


\begin{proof}
    We have $\this q=\this e - \grad A_k(\this{\tilde x})A_k(\this{\tilde x}) = \this e - \grad A(\thisz)[A(\thisz)+\grad A(\thisz)^*(\this{\tilde x}-\thisz)]$.
    Since we take $\beta=0$ in the algorithm, $\this{e} \in \subdiff J_k(\this{\tilde x})$.
    Therefore
    \begin{equation*}
        \begin{split}
        J(\this z)-J_k(\this{\tilde x})
        &
        =
        \frac{1}{2}\norm{A(\this z)}^2
        -\frac{1}{2}\norm{A_k(\this{\tilde x})}^2
        +F(\this z) - F(\this{\tilde x})
        \\
        &
        \ge
        \frac{1}{2}\norm{A(\this z)}^2
        -\frac{1}{2}\norm{A_k(\this{\tilde x})}^2
        +\iprod{\this q}{\this z-\this{\tilde x}}
        +\frac{1}{2}\norm{\this z-\this{\tilde x}}_{\Gamma_k}^2
        \\
        &
        =
        \frac{1}{2}\norm{A(\this z)}^2
        -\frac{1}{2}\norm{A_k(\this{\tilde x})}^2
        -\iprod{A_k(\this{\tilde x})}{\grad A_k(\this{\tilde x})^*(\this z-\this{\tilde x})}
        \\
        \MoveEqLeft[-1]
        +\iprod{\this e}{\this z-\this{\tilde x}}
        +\frac{1}{2}\norm{\this z-\this{\tilde x}}_{\Gamma_k}^2.
        \end{split}
    \end{equation*}
    We expand and simplify
    \begin{equation*}
        \begin{split}
        \frac{1}{2}\norm{A(\this z)}^2
        &
        -\frac{1}{2}\norm{A_k(\this{\tilde x})}^2
        -\iprod{A_k(\this{\tilde x})}{\grad A_k(\this{\tilde x})^*(\this z-\this{\tilde x})}
        \\
        &
        =\frac{1}{2}\norm{A(\this z)}^2
        -\frac{1}{2}\norm{A(\this z)+\grad A(\this z)^*(\this{\tilde x}-\this z)}^2
        \\
        \MoveEqLeft[-1]
        -\iprod{A(\thisz)+\grad A(\this z)^*(\this{\tilde x}-\this z)}{\grad A(\this z)^*(\this z-\this{\tilde x})}
        \\
        &
        =
        \frac{1}{2}\norm{\grad A(\this z)^*(\this{\tilde x}-\this z)}^2.
        \end{split}
    \end{equation*}
    Using the assumption $\norm{\this e} \le \rho \norm{\this{\tilde x}-\this z}$ thus
    \begin{equation*}
        \begin{split}
        J(\this z)-J_k(\this{\tilde x})
        % &
        % \ge
        % \frac{1}{2}\norm{\this z-\this{\tilde x}}_{\grad A(\this z)^*\grad A(\this z)+\Gamma_k}^2+\iprod{\this e}{\this z-\this{\tilde x}}
        % \\
        &
        \ge
        \frac{1}{2}\norm{\this z-\this{\tilde x}}_{\grad A(\this z)\grad A(\this z)^*+\Gamma_k}^2 -\rho \norm{\this z-\this{\tilde x}}^2.
        \end{split}
    \end{equation*}
    This and the assumption $\grad A(\this z)\grad A(\this z)^*+\Gamma_k \ge (2\rho+\beta) \Id$ prove \eqref{eq:j-first-estim}.
\end{proof}

We now merely assume the conditions of the lemma in the limit:

\begin{theorem}
    \label{theorem:beta=0}
    Suppose $\realopt q \defeq - \grad G(\realoptx) \in \subdiff F(\realoptx)$ satisfies $F(z)-F(\realoptx) \ge \iprod{\realopt q}{z-\realoptx} + \frac{1}{2}\norm{z-\realoptx}_{\Gamma}^2$ for all $z$ and some operator $\Gamma$ such that $\grad A(\realoptx)\grad A(\realoptx)^*+\Gamma \ge (2\rho+\theta) \Id$ for some $\theta,\rho>0$.
    Take any $\beta \in (0, \theta)$ satisfying \eqref{eq:gn-convergence:wbound} and initialize $z^0$ close enough to $\realoptx$.
    In \cref{alg:gn-overrelax} use \eqref{eq:beta=0:alg-mod}.
    Then the conclusions of \cref{thm:gn-convergence} hold.
\end{theorem}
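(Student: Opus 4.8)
The plan is to deduce the theorem from \cref{lemma:beta=0}. I will argue that, once $z^0$ is taken close enough to $\realoptx$, the hypotheses of \cref{lemma:beta=0} are met at \emph{every} iteration $k$, with one operator $\Gamma_k$ (a small $\Id$-shift of $\Gamma$, see below) and a \emph{common} $\beta\in(0,\theta)$. Granting this, \eqref{eq:j-first-estim} holds for all $k$, and \eqref{eq:beta=0:alg-mod} together with the telescoped decrease \eqref{eq:jdecrease} gives $\norm{e^k}\le\rho\norm{\tilde x^k-z^k}\to0$ with $e^k\in\subdiff J_k(\tilde x^k)$. Since \eqref{eq:gn-convergence:wbound} is assumed for this $\beta$, and the part of the proof of \cref{thm:gn-convergence} after \eqref{eq:j-first-estim} uses only \eqref{eq:j-first-estim}, $e^k\to0$, convexity of $J_k$, \cref{ass:asecond}, and the bound on $w$, its conclusions then carry over verbatim.

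To verify the hypotheses of \cref{lemma:beta=0} on a closed ball $\mathcal{B}\defeq\closure\Ball{\realoptx}{r}$: the inclusion $q^k\defeq e^k-\grad G_k(\tilde x^k)\in\subdiff F(\tilde x^k)$ is automatic from $e^k\in\subdiff J_k(\tilde x^k)$ and $J_k=G_k+F$ with $G_k$ smooth. For the growth inequality, add the global quadratic minorant at $\realoptx$ evaluated at $z^k$, namely $F(z^k)-F(\realoptx)\ge\iprod{\realopt{q}}{z^k-\realoptx}+\tfrac12\norm{z^k-\realoptx}_\Gamma^2$, to the convexity bound $F(\realoptx)-F(\tilde x^k)\ge\iprod{q^k}{\realoptx-\tilde x^k}$; using $\iprod{q^k}{\realoptx-\tilde x^k}=\iprod{q^k}{z^k-\tilde x^k}-\iprod{q^k}{z^k-\realoptx}$ this rearranges to $F(z^k)-F(\tilde x^k)\ge\iprod{q^k}{z^k-\tilde x^k}+\tfrac12\norm{z^k-\tilde x^k}_\Gamma^2+\mathcal{E}^k$, where $\mathcal{E}^k=\tfrac12\norm{z^k-\realoptx}_\Gamma^2-\tfrac12\norm{\tilde x^k-z^k}_\Gamma^2-\iprod{q^k-\realopt{q}}{z^k-\realoptx}$. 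Now $\norm{q^k-\realopt{q}}$ is small near $\realoptx$: since $\realopt{q}=-\grad G(\realoptx)$ and, by $A\in C^1$ and \cref{ass:asecond}, $\grad G_k\to\grad G$ uniformly on $\mathcal{B}$, it is controlled by $\norm{z^k-\realoptx}+(1+\rho)\norm{\tilde x^k-z^k}$. Using in addition an \emph{error bound} $\norm{z^k-\realoptx}\le c_1\norm{\tilde x^k-z^k}$ — valid because $\tilde x^k$ is a near-critical point of $J_k$ (by $\norm{e^k}\le\rho\norm{\tilde x^k-z^k}$) and $J_k$ approximates $J$, which enjoys local quadratic growth at $\realoptx$ — one obtains $\abs{\mathcal{E}^k}\le c_3\norm{\tilde x^k-z^k}^2$. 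Hence the growth inequality holds with $\Gamma_k\defeq\Gamma-2c_3\Id$, and the operator condition becomes $\grad A(z^k)\grad A(z^k)^*+\Gamma\ge(2\rho+\beta+2c_3)\Id$, which holds on $\mathcal{B}$ (by continuity of $\grad A$ and $\grad A(\realoptx)\grad A(\realoptx)^*+\Gamma\ge(2\rho+\theta)\Id$) once $r$ and $\rho$ are small enough that $c_3<(\theta-\beta)/2$.

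It remains to ensure that all $z^k$ and $\tilde x^k$ stay in $\mathcal{B}$, which I would prove by induction on $k$: if $z^0,\dots,z^k\in\mathcal{B}$, then \cref{lemma:beta=0} applies through step $k$, so \eqref{eq:jdecrease} gives $J(z^j)-J(z^{j+1})\ge\tfrac{w\varepsilon}{2}\norm{z^j-\tilde x^j}^2$ for $j\le k$; together with the quadratic growth of $J$ and the error bound this forces linear decay $J(z^j)-L\le\kappa^j(J(z^0)-L)$ for some $\kappa<1$, hence geometrically summable increments $\sum_j\norm{z^{j+1}-z^j}=w\sum_j\norm{\tilde x^j-z^j}\le c_4\sqrt{J(z^0)-L}$. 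Choosing $z^0$ with $\norm{z^0-\realoptx}$ and $J(z^0)-L$ small then bounds $\norm{z^{k+1}-\realoptx}$ — and, after possibly shrinking $r$, $\norm{\tilde x^k-\realoptx}$ — by $r$, closing the induction. Finally the quadratic growth makes $\{\realoptx\}$ a disjoint component of $\zset_L$, so conclusion \cref{item:gn-convergence:disjoint-component} forces $z^k\to\realoptx$, the announced local convergence.

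The main obstacle is this neighbourhood-invariance step, which hinges on extracting genuine quadratic growth of the \emph{full} objective $J$ (not merely of $F$) at $\realoptx$: combining the hypothesis on $F$ with the linearisation control of \cref{ass:asecond} (cf.\ \eqref{eq:aest2}) gives $J(z)-L\ge\tfrac12\norm{z-\realoptx}_{\grad A(\realoptx)\grad A(\realoptx)^*+\Gamma}^2-\MM A_{\max}\norm{z-\realoptx}^2$, so quadratic growth — and with it the error bound and the linear-convergence estimate — is available only when $2\rho+\theta$ dominates $2\MM A_{\max}$, which is where the accuracy hypothesis must be exploited and what determines how small the admissible initial ball is. The secondary delicate point, already visible in the bound on $\mathcal{E}^k$, is that the error committed in transporting the quadratic minorant from the fixed point $\realoptx$ to the running pair $(z^k,\tilde x^k)$ must be of order $\norm{z^k-\tilde x^k}^2$ rather than a fixed multiple of $r^2$; this is precisely what the error bound delivers, and the inexactness tolerance $\rho$ of \eqref{eq:beta=0:alg-mod} must be chosen small enough that $c_3$ fits inside the margin $(\theta-\beta)/2$.
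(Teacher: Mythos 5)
Your overall architecture is the same as the paper's (verify the hypotheses of \cref{lemma:beta=0} on a small ball around $\realoptx$, then an induction keeping all iterates in that ball, then hand over to the machinery of \cref{thm:gn-convergence}), but the step where you verify the growth hypothesis of \cref{lemma:beta=0} has a genuine gap. Your error term $\mathcal{E}^k=\tfrac12\norm{z^k-\realoptx}_\Gamma^2-\tfrac12\norm{\tilde x^k-z^k}_\Gamma^2-\iprod{q^k-\realopt q}{z^k-\realoptx}$ is $2$-homogeneous in the increments, so its size \emph{relative to} $\norm{\tilde x^k-z^k}^2$ cannot be reduced by shrinking $r$ or $\rho$. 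In the regime $\norm{z^k-\realoptx}\ll\norm{\tilde x^k-z^k}$ — which your error bound $\norm{z^k-\realoptx}\le c_1\norm{\tilde x^k-z^k}$ does not exclude, being an upper bound on $\norm{z^k-\realoptx}$ — one has $\mathcal{E}^k\approx-\tfrac12\norm{\tilde x^k-z^k}_\Gamma^2$, so the best admissible $c_3$ is of order $\tfrac12\lambda_{\max}(\Gamma)$. Demanding $c_3<(\theta-\beta)/2$ is then a hidden extra hypothesis $\lambda_{\max}(\Gamma)\lesssim\theta-\beta$; and with $\Gamma_k=\Gamma-2c_3\Id$ and $2c_3\gtrsim\lambda_{\max}(\Gamma)$, the operator condition of \cref{lemma:beta=0} collapses to $\grad A(z^k)\grad A(z^k)^*\ge(2\rho+\beta)\Id$, i.e.\ precisely the injectivity of $\grad A(\realoptx)^*$ that this framework is built to avoid (in the intended inverse-problem regime $\grad A\grad A^*$ is degenerate and $\Gamma$ must supply essentially all of $2\rho+\theta$). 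The root cause is that adding the growth inequality anchored at $\realoptx$ to the subgradient inequality at $\tilde x^k$ only produces the gain $\tfrac12\norm{z^k-\realoptx}_\Gamma^2$, still anchored at $\realoptx$; turning it into the two-point gain $\tfrac12\norm{z^k-\tilde x^k}_{\Gamma_k}^2$ required by \cref{lemma:beta=0} would need a \emph{lower} bound on $\norm{z^k-\realoptx}_\Gamma$ in terms of $\norm{\tilde x^k-z^k}_\Gamma$, which you do not have. The paper does not attempt this transport by inequalities: it asserts, via outer semicontinuity of $\subdiff F$ and continuity of $A$ and $\grad A$, that for $z^k,\tilde x^k$ in a sufficiently small ball the hypotheses of \cref{lemma:beta=0} hold with the \emph{unshifted} $\Gamma$, and spends its effort on the neighbourhood-invariance induction instead.

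That induction is the second place where your route diverges, and there it is both heavier and shakier than necessary. Your error bound and the linear decay of $J(z^k)-L$ (with geometrically summable increments) rest on quadratic growth of $J$ at $\realoptx$ with a constant beating the linearisation-error constant of order $\MM A_{\max}$; this is not among the theorem's hypotheses, and, as you yourself half-concede, it cannot be manufactured by shrinking the initial ball, since both competing terms are quadratic in $\norm{z-\realoptx}$. The paper's induction needs neither an error bound nor a rate: it propagates the three statements $z^k\in\B(\realoptx,r)$, $\norm{z^k-z^{k-1}}\le r$, and $J(z^k)\le J(\realoptx)+\delta r^2\varepsilon/(2w)$, gets $\norm{z^{k+1}-z^k}\le r$ directly from the sufficient-decrease inequality \eqref{eq:jdecrease} combined with $J(z^{k+1})\ge J(\realoptx)$, and then uses only the local quadratic growth $J(z)-J(\realoptx)\ge c\norm{z-\realoptx}^2$ near $\realoptx$ (derived from the hypothesis on $F$ as in \cref{lemma:beta=0}), with $\delta$ small, to conclude $z^{k+1}\in\B(\realoptx,r)$. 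So, in sum: your proposal imports assumptions the statement does not grant (smallness of $\norm{\Gamma}$ relative to $\theta-\beta$, an error bound, growth of $J$ dominating $\MM A_{\max}$), and the $\mathcal{E}^k$-absorption step would fail exactly in the degenerate setting the theorem is meant to cover.
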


\begin{proof}
    Let $\this q \defeq \this e -\grad G_k(\tilde x^k) \in \subdiff F(\this{\tilde x})$.
    By the outer semicontinuity of the convex subdifferential $\subdiff F$ \cite{hiriarturruty2001fundamentals}, and the continuity of $\grad A$ and $A$, it is clear that for all $\epsilon>0$ that there exists $r'>0$ such that $\norm{\this{\tilde x}-\realoptx},\norm{\this z-\realoptx} \le r'$ ensures $\norm{\this q-\realopt q} \le \epsilon$, $\grad A(\this z)\grad A(\this z)^*+\Gamma \ge (2\rho+\beta) \Id$, and $F(\this z)-F(\this{\tilde x}) \ge \iprod{\this q}{\this z-\this{\tilde x}} + \frac{1}{2}\norm{\this z-\this{\tilde x}}_{\Gamma}^2$.
    Therefore, if we can ensure that $\{\thisz\}_{k \in \N}, \{\this{\tilde x}\}_{k \in \N}  \subset \B(\realoptx, r')$ for some small enough $r'>0$, the claim follows from \cref{lemma:beta=0}.

    Since $\this{\tilde x}=\inv w(\nextz-\thisz)+\thisz$, it suffices to show for some small $r>0$, for all $k \in \N$, that $\thisz \in \B(\realoptx, r)$, and that $\norm{\thisz-z^{k-1}} \le r$. We moreover claim that $J(\thisz) \le J(\realoptx)+ \delta r^2\varepsilon/(2w)$ for some $\delta \in (0, 1]$. We prove all of this by induction. The induction basis follows from initializing $z^0=z^{-1}$ close enough to $\realoptx$, that is, with $r>0$ small enough.
    For the induction step, assume the claim holds for $k$. We will prove that it holds for $k+1$. Indeed, by \cref{lemma:beta=0}, \eqref{eq:j-first-estim} holds for $k$. Thus, by the proof \cref{thm:gn-convergence}, \eqref{eq:jdecrease} holds for $k$:
    $
        J(\zk)- J(\zkn) > \frac{w\varepsilon}{2} \norm{\zk - \tx}^2.
    $
    By the inductive assumption and $J(\nexxt z) \ge J(\realoptx)$, thus
    \[
        \frac{\varepsilon}{2w}\norm{\nextz-\thisz}^2 \le J(\thisz) - J(\nextz) \le J(\thisz)-J(\realoptx) \le \frac{\delta r^2\varepsilon}{2w}.
    \]
    This shows $\norm{\nextz-\thisz} \le r$.
    Since $J(\nexxt z) \le J(\this z)$, also $J(\nexxt z) \le J(\realoptx)+ \delta r^2\varepsilon/(2w)$.

    It remains to prove $\nexxt z \in \B(\realoptx, r)$.
    We have $\realopt q= - \grad A(\realoptx)A(\realoptx)$ and for $z \in \B(\realoptx, r'')$ with $r''$ small enough, $A(\realoptx)=A(\realoptz)+\grad A(\realoptx)(\realoptx-z)+O(\norm{z-\realoptx}^2)$.
    Therefore, arguing similarly to \cref{lemma:beta=0},
    \[
        J(z)-J(\realoptx)
        \ge \frac{1}{2}\norm{z-\realoptx}^2_{\grad A(\realoptx)\grad A(\realoptx)^*+\Gamma}
        - O(\norm{z-\realoptx}^2)
        \ge c \norm{z-\realoptx}^2
    \]
    for any $0<c<\theta+2\rho$ and $z \in \B(\realoptx, r'')$.
    Since $\thisz \in \B(\realoptx, r)$ and, as we have shown, $\norm{\nextz-\thisz} \le r$, we have $\nextz \in \B(\realoptx, 2r)$. Therefore, taking $r < r''/2$, we have $\nextz \in \B(\realoptx, r'')$.
    Taking $z=\nextz$, it now follows
    \[
        \frac{\delta r^2\varepsilon}{2w} \ge J(\thisz) - J(\realoptx) \ge J(\nextz)-J(\realoptx)
        \ge c \norm{\nextz-\realoptx}^2.
    \]
    Therefore, if $\delta>0$ is small enough, $\nextz \in \B(\realoptx, r)$.
    This finishes the induction and the proof.
\end{proof}

We now need to obtain some local strong convexity of $F$. We concentrate on total variation; in the EIT problems that we consider in \cref{sec:app}, more local strong convexity could be obtained from the box constraints.
Related geometric approaches in \cite{tuomov-subreg,lewis2002active,lewis2013partial,liang2014local,garrigos2017convergence} show the local linear convergence of convex optimization methods, and even globally to submanifolds. The next lemma establishes the fundamental idea of the approach. The condition in it has been related to the \emph{strong (metric) subregularity} of the subdifferentials $\subdiff F$ \cite{aragon2008characterization}.

\begin{lemma}
    \label{lemma:interior-local-sc}
    Let $F: \R^n \to \extR$ be convex and $q \in \interior \subdiff F(x)$ for some $x \in \R^n$.
    Then for any $\gamma>0$, for some $\rho>0$, $F(z)-F(x) \ge \iprod{q}{z-x} + \frac{\gamma}{2}\norm{z-x}^2$ for all $z \in \B(x, \rho)$.
\end{lemma}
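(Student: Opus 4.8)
The statement is a quantitative local lower bound coming from the fact that $q$ lies in the \emph{interior} of $\subdiff F(x)$. The plan is to exploit that interiority to absorb the quadratic term into the subgradient inequality. Since $q \in \interior \subdiff F(x)$, there is some $r>0$ such that $\closure\B(q, r) \subset \subdiff F(x)$. The key observation is that for \emph{every} unit vector $u$, both $q + r u \in \subdiff F(x)$. Applying the subgradient inequality for these perturbed subgradients gives, for any $z$,
\[
    F(z) - F(x) \ge \iprod{q + r u}{z-x} = \iprod{q}{z-x} + r\iprod{u}{z-x}.
\]
Choosing $u \defeq (z-x)/\norm{z-x}$ when $z \ne x$ yields $F(z)-F(x) \ge \iprod{q}{z-x} + r\norm{z-x}$, a \emph{linear} (hence sharper than quadratic for small $\norm{z-x}$) lower bound.

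From here the quadratic bound is immediate: given $\gamma>0$, set $\rho \defeq 2r/\gamma$. Then for $z \in \B(x, \rho)$ we have $\tfrac{\gamma}{2}\norm{z-x}^2 = \tfrac{\gamma}{2}\norm{z-x}\cdot\norm{z-x} \le \tfrac{\gamma}{2}\norm{z-x}\cdot\rho = r\norm{z-x}$, so
\[
    F(z) - F(x) \ge \iprod{q}{z-x} + r\norm{z-x} \ge \iprod{q}{z-x} + \frac{\gamma}{2}\norm{z-x}^2,
\]
which is the claim. The case $z = x$ is trivial.

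There is essentially no hard part here: the argument is the standard trick that a subgradient in the interior of the subdifferential forces a linear growth estimate. The only point requiring a line of care is that $F$ is proper and convex, so that $x \in \Dom\subdiff F \subset \Dom F$ and the subgradient inequality $F(z) \ge F(x) + \iprod{p}{z-x}$ holds for all $p \in \subdiff F(x)$ and all $z \in \R^n$ (with the convention that it is vacuous where $F(z) = +\infty$); this is exactly the defining property of the convex subdifferential, so nothing beyond \cite{hiriarturruty2001fundamentals} is needed. One should also note that the choice of $\rho$ depends only on $\gamma$ and on the radius $r$ of a ball of subgradients around $q$, not on $z$, which is what the statement asserts.
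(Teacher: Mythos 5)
Your proof is correct and follows essentially the same route as the paper: both exploit a ball $\B(q,\epsilon)\subset\subdiff F(x)$ to apply the subgradient inequality with a subgradient tilted in the direction $z-x$, yielding the same radius $\rho=2\epsilon/\gamma$. The only cosmetic difference is that you first derive the linear growth bound $F(z)-F(x)\ge\iprod{q}{z-x}+r\norm{z-x}$ and then convert it on the ball, whereas the paper plugs in $q'=q+\tfrac{\gamma}{2}(z-x)$ directly to get the quadratic term in one step.
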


\begin{proof}
    By the definition of the convex subdifferential,
    \[
        F(z) - F(x) \ge \sup_{q' \in \subdiff F(x)} \iprod{q'}{z-x}
        = \iprod{q}{z-x} + \sup_{q' \in \subdiff F(x)} \iprod{q'-q}{z-x}
    \]
    Because $q \in \interior \subdiff F(x)$, there exists $\epsilon>0$ such that $\B(q, \epsilon) \subset \subdiff F(x)$. We can therefore take $q'=q+\frac{\gamma}{2}(z-x)$ provided $\frac{\gamma}{2}\norm{z-x} \le \epsilon$, that is, if $z \in \B(x, \rho)$ for $\rho=2\epsilon/\gamma$. This immediately yields the claim.
\end{proof}

For the next lemma, we recall we that
$
    \norm{g}_{p,1} \defeq \sum_{i=1}^{n} \norm{g_{i\freevar}}_p,
$
where $g \in \R^{n \times m}$ and we write $g_{i\freevar}=(g_{11},\ldots,g_{1m})$.

\begin{lemma}
    \label{eq:lemma-coupling}
    Let $F(x) \defeq \norm{Wx}_{p,1}$ for some $W \in \R^{(n \times m) \times n}$. Assume for all $i=1,\ldots,n$ the existence of $k_i \in \{1,\ldots,n\}$ such that $[Wx]_{k_i\freevar} = 0$ and $W_{k_i\freevar,i} \ne 0$.
    Then $\interior \subdiff F(x) \ne \emptyset$.
\end{lemma}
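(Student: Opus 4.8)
The plan is to compute $\subdiff F(x)$ explicitly through the convex chain rule and then exhibit an interior point directly. Since $F = h \circ W$ with $h \defeq \norm{\freevar}_{p,1}$ finite-valued and continuous everywhere, no constraint qualification is needed and $\subdiff F(x) = W^*\,\subdiff h(Wx)$. Moreover $h(y) = \sum_{k=1}^n \norm{y_{k\freevar}}_p$ is separable over the $n$ row-blocks, so $\subdiff h(y) = \prod_{k=1}^n \subdiff\norm{\freevar}_p(y_{k\freevar})$. Writing $q$ for the conjugate exponent ($\tfrac1p + \tfrac1q = 1$), the factor $\subdiff\norm{\freevar}_p(v)$ equals the closed dual-norm ball $\{u \mid \norm{u}_q \le 1\}$ when $v = 0$, and is a subset of the unit sphere $\{\norm{u}_q = 1\}$ (hence has empty interior in $\R^m$) otherwise. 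Put $Z \defeq \{k \mid [Wx]_{k\freevar} = 0\}$; by hypothesis $k_i \in Z$ for every $i$.

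Next I would build a candidate interior point. Fix any selection $g_k \in \subdiff\norm{\freevar}_p([Wx]_{k\freevar})$ for $k \notin Z$ and set $g^\circ \in \R^{n\times m}$ by $g^\circ_{k\freevar} \defeq 0$ for $k \in Z$ and $g^\circ_{k\freevar} \defeq g_k$ otherwise. Then $g^\circ \in \subdiff h(Wx)$, and for every sufficiently small $\epsilon > 0$ the entire translated ball $g^\circ + \epsilon U$ still lies in $\subdiff h(Wx)$, where $U$ is the unit ball of the coordinate subspace $V \defeq \{h \in \R^{n\times m} \mid h_{k\freevar} = 0 \text{ for all } k \notin Z\}$ — each free block carries a full ball around $0$. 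Consequently $q^\circ \defeq W^* g^\circ \in \subdiff F(x)$ and $q^\circ + \epsilon\, W^*(U) \subseteq \subdiff F(x)$, so it suffices to show that the linear map $W^*|_V : V \to \R^n$ is surjective; equivalently, that the vectors $\{W_{kj,\freevar} \mid k \in Z,\ j = 1,\dots,m\}$ span $\R^n$, where $W_{kj,\freevar} \defeq (W_{kj,1},\dots,W_{kj,n})$.

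I expect this surjectivity to be the main obstacle, and it is where the coupling hypothesis has to be exploited: for each coordinate $i$, the block $k_i$ lies in $Z$ and $W_{k_i\freevar,i} \ne 0$, so a perturbation of $g$ supported in block $k_i$ does move the $i$-th entry of $W^* g$; the catch is that it may move other entries at the same time, so one cannot conclude surjectivity block-by-block. The natural way to push through is to try to choose the indices $k_i$ pairwise distinct, so that the $n$ block perturbations are independent, assemble them into a single linear endomorphism of $\R^n$, and argue it is invertible — this decoupling of the coordinates across the free blocks is the genuine technical content, and is what the subsequent argument has to supply. Once surjectivity holds, $W^*(U)$ is a neighbourhood of $0$ in $\R^n$, hence $q^\circ \in \interior \subdiff F(x)$, proving $\interior\subdiff F(x) \ne \emptyset$.
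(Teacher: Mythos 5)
Your reduction is sound as far as it goes: the chain rule $\subdiff F(x)=W^*\subdiff\norm{\freevar}_{p,1}(Wx)$, the product structure of $\subdiff\norm{\freevar}_{p,1}$, the choice of $g^\circ$, and the observation that $q^\circ+\epsilon\,W^*(U)\subseteq\subdiff F(x)$, so that surjectivity of $W^*|_V$ would yield an interior point, are all correct. But the argument stops exactly at the step you yourself flag as the technical content: surjectivity of $W^*|_V$ is never established, and it does not follow from the lemma's hypotheses. The hypothesis gives, for each coordinate $i$, one block $k_i$ with $[Wx]_{k_i\freevar}=0$ and $W_{k_i\freevar,i}\ne 0$; the $k_i$ need not be pairwise distinct, and even if they were, the assembled endomorphism of $\R^n$ (with entries $\iprod{W_{k_i\freevar,j}}{v_i}$ for your chosen perturbation directions $v_i$) is not invertible merely because its diagonal entries are nonzero. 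Worse, the needed surjectivity can genuinely fail under the stated hypotheses: take $n=2$, $m=1$, the two block rows of $W$ equal to $(1,1)$ and $(2,2)$, and $x=(1,-1)$. Both blocks of $Wx$ vanish and the hypothesis holds with $k_1=k_2=1$, yet the range of $W^*$ is the span of $(1,1)^T$, so $W^*|_V$ cannot be onto; indeed there $\subdiff F(x)$ is a line segment with empty interior. So the missing step is not a routine verification that a cleverer choice of perturbations will supply: along your route the conclusion requires an additional joint nondegeneracy (rank/spanning) condition on the zero blocks $\{W_{k\freevar,\cdot}: k\in Z\}$ beyond the stated componentwise condition.

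For comparison, the paper's proof does not go through surjectivity at all: it computes, for each $i$, the coordinate projection $[\subdiff F(x)]_i=\sum_{k\ne k_i}\iprod{W_{k\freevar,i}}{[\subdiff\norm{\freevar}_{p,1}(Wx)]_{k\freevar}}+\iprod{W_{k_i\freevar,i}}{B_{p^*}}$, notes that the last term gives each projection nonempty interior, and then passes from these one-dimensional projections to the full set via the inclusion $\interior\subdiff F(x)\supset\prod_i\interior[\subdiff F(x)]_i$. Your reduction puts its finger on precisely the point that this final passage has to carry: in the proportional-rows configuration above, every coordinate projection is a nondegenerate interval while the set itself is a diagonal segment, so coordinatewise nondegeneracy alone cannot produce an interior point. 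In short, your approach is a legitimate and in fact more transparent route, but it is incomplete as written, and the gap you left open is not fillable from the lemma's hypotheses alone; it marks a place where an extra independence assumption on the blocks (or a weakening of the conclusion, e.g.\ to interiority relative to the range of $W^*$) must enter, both in your argument and at the corresponding step of the paper's.
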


\begin{proof}
    We have $\subdiff F(x)=W^* \subdiff \norm{\freevar}_{p,1}(Wx)$, where $\subdiff \norm{\freevar}_{p,1}(g)=\prod_{i=1}^n \subdiff \norm{\freevar}_p(g_{i\freevar})$.
    From our assumptions, for all $i=1,\ldots,n$ we have $\subdiff \norm{\freevar}_p([Wx]_{k_i\freevar})=\B_{p^*}$ for the dual unit ball $B_{p^*} \defeq \{ q \in \R^m \mid \norm{q}_{p^*} \le 1\}$ with $1/p+1/p^*=1$.
    Hence, for all $i=1,\ldots,n$, the projection of $\subdiff F(x)$ to the $i$:th coordinate,
    \begin{equation*}
        \begin{split}
        [\subdiff F(x)]_i & = [W^* \subdiff \norm{\freevar}_{p,1}(Wx)]_i
        =\sum_{k=1}^n \iprod{W_{k\freevar,i}}{[\subdiff \norm{\freevar}_{p,1}(Wx)]_{k\freevar}}
        \\
        &
        =\sum_{k \ne k_i} \iprod{W_{k\freevar,i}}{[\subdiff \norm{\freevar}_{p,1}(Wx)]_{k\freevar}}
        +\iprod{W_{k_i\freevar,i}}{\B_{p^*}}.
        \end{split}
    \end{equation*}
    The last term has non-empty interior. Hence $\interior [\subdiff F(x)]_i \ne \emptyset$ for all $i=1,\ldots,n$. Since $\interior\subdiff F(x) \supset \prod_{k=1}^n \interior [\subdiff F(x)]_i$, the claim follows.
\end{proof}

The next theorem shows that forward-differences discretised total variation is locally strongly convex around a ``strictly piecewise constant'' $\realoptx$.
%If, in fact, if the solution pair $(\realoptx, \realopt q)$ for $\realopt q \defeq -\grad G(\realoptx)$ is ``strictly complementary'' in the sense that $\realopt q \in \interior \subdiff F(\realoptx)$ (instead of merely  $\realopt q \in \subdiff F(\realoptx)$, which holds by definition), then by \cref{theorem:beta=0}, we can expect the iterates of \cref{alg:gn-overrelax} to converge to $\realoptx$ provided we initialize close enough to this point.

\begin{theorem}
    \label{thm:tv-strictly-piecewise-constant}
    Let $F(x)=\norm{\grad_hx}_{p,1}$ for $\grad_h \in \R^{(n_1 \times n_2 \times 2) \times (n_1 \times n_2)}$ the forward differences operator with (discrete) Neumann boundary conditions and cell width $h>0$.
    Assume that $\realoptx \in {n_1 \times n_2}$ is \emph{strictly piecewise constant} in the sense that for each pixel coordinate $(i,j) \in \{1,\ldots,n_1\} \times \{1,\ldots,n_2\}$ there exists a neighboring pixel coordinate
    \[
        (k_{ij},k_{ij}) \in \mathcal{N}_{i,j} \defeq  \{1,\ldots,n_1\} \times \{1,\ldots,n_2\} \isect \{(i,j),(i+1,j),(i,j+1),(i-1,j),(i,j-1)\}
    \]
    with $[\grad_h \realoptx]_{k_{ij}k_{ij}\freevar}=0$.
    Then $\interior \subdiff F(\realoptx) \ne \emptyset$.
    In particular, for any $\gamma>0$ and $\realopt q \in \interior \subdiff F(\realoptx)$ and $\rho>0$ such that $F(z)-F(\realoptx) \ge \iprod{\realopt q}{z-\realoptx} + \frac{\gamma}{2}\norm{z-\realoptx}^2$ for all $z \in \B(\realoptx, \rho)$.
\end{theorem}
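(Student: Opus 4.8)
The plan is to reduce the statement to \cref{eq:lemma-coupling}. Observe that $F(x)=\norm{\grad_h x}_{p,1}$ is literally of the form $\norm{Wx}_{p,1}$ treated there, with $W=\grad_h$, the common index set $\{1,\dots,n\}$ taken to be the pixel grid $\{1,\dots,n_1\}\times\{1,\dots,n_2\}$ (so $n=n_1n_2$), and $m=2$ accounting for the two difference directions. For a pixel $k$, the block $[\grad_h x]_{k\freevar}\in\R^2$ consists of the two one-sided differences taken at $k$; under the Neumann convention each such difference is either $\pm\tfrac1h$ times a difference of two pixel values of $x$, or identically $0$. Hence the block $(\grad_h)_{k\freevar,(i,j)}$ is nonzero exactly when $x_{(i,j)}$ enters one of the differences based at $k$ --- that is, for $k=(i,j)$ and for the (at most two) in-grid pixels obtained from $(i,j)$ by a single backward step in a coordinate direction.

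With this dictionary it remains to check the hypothesis of \cref{eq:lemma-coupling}: for every pixel $(i,j)$ there should be a pixel $k_{ij}$ with $[\grad_h\realoptx]_{k_{ij}\freevar}=0$ and $(\grad_h)_{k_{ij}\freevar,(i,j)}\ne0$. For the first condition we invoke the strict-piecewise-constant hypothesis, which supplies a pixel in $\mathcal N_{i,j}$ at which the discrete gradient of $\realoptx$ vanishes; for the second we must ensure that this pixel can in fact be chosen among $(i,j)$ and the cells at whose difference rows $x_{(i,j)}$ actually appears, dealing separately with pixels on the boundary, where the Neumann conditions delete some difference rows. Granting this, \cref{eq:lemma-coupling} yields $\interior\subdiff F(\realoptx)\ne\emptyset$, which is the first assertion.

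The main obstacle is precisely this matching step: the stencil $\mathcal N_{i,j}$ in the hypothesis is a priori larger than the set of difference rows in which a given pixel carries a nonzero coefficient of $\grad_h$, so one must either restrict attention to the relevant sub-stencil, or argue from the structure of the constant regions of $\realoptx$ that a usable $k_{ij}$ always exists, and in either case carry out the boundary bookkeeping carefully. Everything else is mechanical. The remaining, ``in particular'' clause is then immediate: given any $\gamma>0$ and $\realopt q\in\interior\subdiff F(\realoptx)$, \cref{lemma:interior-local-sc} (applied with $x=\realoptx$, $q=\realopt q$) produces $\rho>0$ with $F(z)-F(\realoptx)\ge\iprod{\realopt q}{z-\realoptx}+\tfrac\gamma2\norm{z-\realoptx}^2$ for all $z\in\B(\realoptx,\rho)$.
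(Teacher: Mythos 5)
Your route is the paper's route: cast $F$ as $\norm{W\freevar}_{p,1}$ with $W=\grad_h$, invoke \cref{eq:lemma-coupling} to get $\interior\subdiff F(\realoptx)\ne\emptyset$, and obtain the quadratic growth from \cref{lemma:interior-local-sc}; the ``in particular'' clause is handled exactly as in the paper. The only divergence is that you stop at the matching step and write ``granting this'': the verification that strict piecewise constancy supplies, for every pixel $(i,j)$, a row $k_{ij}$ with $[\grad_h\realoptx]_{k_{ij}\freevar}=0$ \emph{and} $(\grad_h)_{k_{ij}\freevar,ij}\ne0$ is precisely the content of the paper's single substantive sentence. So: same decomposition, same two lemmas, but your proposal leaves unproved the one step the paper treats as immediate, and therefore is not a complete proof as it stands.

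That said, your hesitation is substantive rather than mere bookkeeping. For forward differences, $x_{ij}$ enters only the rows based at $(i,j)$, $(i-1,j)$ and $(i,j-1)$, while the stencil $\mathcal{N}_{i,j}$ in the hypothesis also admits $(i+1,j)$ and $(i,j+1)$; a vanishing-gradient neighbour of the latter kind gives $(\grad_h)_{k_{ij}\freevar,ij}=0$ and is useless for \cref{eq:lemma-coupling}. Indeed, take $p=2$ and $\realoptx$ constant in the second coordinate with first-coordinate profile $(0,1,1)$: every pixel then has a vanishing-gradient neighbour in $\mathcal{N}_{i,j}$, yet every row in which a first-column pixel appears carries the nonzero gradient block $(1/h,0)$, so the projection of $\subdiff F(\realoptx)$ onto that pixel's coordinate is a singleton and the interior is empty. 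Hence the reduction only goes through when $k_{ij}$ can be chosen in the backward sub-stencil $\{(i,j),(i-1,j),(i,j-1)\}$ (with the obvious boundary adjustments); the paper implicitly reads the hypothesis this way, and neither its one-line verification nor your proposal handles the forward-neighbour case. To match the paper you would simply assert that reading; to genuinely close the gap you flagged, the stencil in the hypothesis must be restricted (or an extra argument supplied), since the example shows the usable $k_{ij}$ does not ``always exist'' under the hypothesis as literally stated.
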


\begin{proof}
    The strict piecewise constancy assumption verifies with $W=\grad_h$ for all $i=1,\ldots,n_1$ and $j=1,\ldots,n_2$ the existence of $(k, k) = (k_{ij},k_{ij}) \in \{1,\ldots,n_1\} \times \{1,\ldots,n_2\}$ such that $[W\realoptx]_{kk\freevar} = 0$ and $W_{kk\freevar,ij} \ne 0$. The non-empty interior of the subdifferential is now a consequence of \cref{eq:lemma-coupling}.
    The strong convexity at $\realoptx$ then follows from \cref{lemma:interior-local-sc}.
\end{proof}

If the solution is not strictly piecewise constant at some pixel, then the fitting term $G$ has to provide the corresponding second-order growth. This is reasonable to expect, as total variation whenever allowed by the fitting term, would produce piecewise constant solutions.

\begin{corollary}
    Let $F(x)=\norm{\grad_hx}_{p,1}$ for $\grad_h \in \R^{(n_1 \times n_2 \times 2) \times (n_1 \times n_2)}$ the forward differences operator with (discrete) Neumann boundary conditions. Let $\realoptx \in \inv{[\subdiff_C J]}(0)$ be a Clarke-critical point of $J$.
    For all pixels $(i,j) \in \{1,\ldots,n_1\} \times \{1,\ldots,n_2\}$ such that $-[\grad G(\realoptx)]_{ij} \not \in \interior [\subdiff F(\realoptx)]_{ij}$ (in particular, if $(i, j)$ fails the strict piecewise constancy assumption of \cref{thm:tv-strictly-piecewise-constant} in the sense that there exists no $(k_{ij},k_{ij}) \in \mathcal{N}_{i,j}$ with $[\grad_h \realoptx]_{k_{ij}k_{ij}\freevar}=0$), assume that $[\grad A(\realoptx)\grad A(\realoptx)^*]_{ij,ij} \ge 2\rho+\theta$ for some $\theta>0$.
    Take any $\beta \in (0, \theta)$ satisfying \eqref{eq:gn-convergence:wbound} and initialize $z^0$ close enough to $\realoptx$.
    In \cref{alg:gn-overrelax} use \eqref{eq:beta=0:alg-mod}.
    Then the conclusions of \cref{thm:gn-convergence} hold.
\end{corollary}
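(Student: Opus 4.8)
The strategy is to deduce the corollary from \cref{theorem:beta=0} by building an explicit curvature operator $\Gamma$ for $F$ at $\realoptx$ that ``mixes'' the two available sources of second-order growth: the regulariser on the pixels where it is already locally strongly convex, and the fitting term on the remaining pixels.

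I would first supply the two data that \cref{theorem:beta=0} asks of $\realoptx$. Since $\realoptx$ is Clarke-critical and $J=G+F$ with $G\defeq\tfrac12\norm{A(\freevar)}^2\in C^1$ and $F$ convex, the Clarke sum rule gives $0\in\subdiff_C J(\realoptx)=\grad G(\realoptx)+\subdiff F(\realoptx)$, hence $\realopt q\defeq-\grad G(\realoptx)\in\subdiff F(\realoptx)$, which is the required vector. Next, split the pixel coordinates into $\mathcal G\defeq\{(i,j): \realopt q_{ij}=-[\grad G(\realoptx)]_{ij}\in\interior[\subdiff F(\realoptx)]_{ij}\}$ and its complement $\mathcal B$; by hypothesis $[\grad A(\realoptx)\grad A(\realoptx)^*]_{ij,ij}\ge 2\rho+\theta$ on $\mathcal B$. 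Writing $P_{\mathcal G}$ for the orthogonal projection onto the good coordinates, I propose $\Gamma\defeq\gamma P_{\mathcal G}$ for a constant $\gamma>0$ to be fixed last.

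Two facts then have to be checked. (a) \emph{Partial local strong convexity:} $F(z)-F(\realoptx)\ge\iprod{\realopt q}{z-\realoptx}+\tfrac12\norm{z-\realoptx}_\Gamma^2$ for all $z$ in a neighbourhood of $\realoptx$ (which is all the proof of \cref{theorem:beta=0} actually uses). Using the product form $\subdiff\norm{\freevar}_{p,1}(\grad_h\realoptx)=\prod_e\subdiff\norm{\freevar}_p([\grad_h\realoptx]_e)$ as in \cref{eq:lemma-coupling}, but restricted to the good pixels (each of which, being strictly piecewise constant there, supplies an adjacent inactive edge on which $\subdiff\norm{\freevar}_p=\B_{p^*}$ is full-dimensional), one obtains an $\epsilon>0$ with $\realopt q+\delta\in\subdiff F(\realoptx)$ whenever $\norm\delta\le\epsilon$ and $\delta$ is supported on $\mathcal G$. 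Then, as in the proof of \cref{lemma:interior-local-sc}, $F(z)-F(\realoptx)\ge\sup_{q'\in\subdiff F(\realoptx)}\iprod{q'}{z-\realoptx}\ge\iprod{\realopt q}{z-\realoptx}+\epsilon\norm{P_{\mathcal G}(z-\realoptx)}$, and the last term dominates $\tfrac\gamma2\norm{P_{\mathcal G}(z-\realoptx)}^2$ once $\norm{z-\realoptx}\le 2\epsilon/\gamma$. (b) \emph{Operator inequality:} $\grad A(\realoptx)\grad A(\realoptx)^*+\Gamma\ge(2\rho+\theta)\Id$. With $M\defeq\grad A(\realoptx)\grad A(\realoptx)^*\succeq0$ and a unit vector decomposed as $v=(v_{\mathcal G},v_{\mathcal B})$, we have $\iprod{(M+\Gamma)v}{v}\ge\gamma\norm{v_{\mathcal G}}^2$, so the estimate can fail only for $v$ with $\norm{v_{\mathcal G}}$ small, where (taking $\gamma$ large) it reduces to a lower bound on the bad block $M_{\mathcal B\mathcal B}$ by $2\rho+\theta$, which is what the assumed bounds on the diagonal entries $[M]_{ij,ij}$, $(i,j)\in\mathcal B$, are meant to furnish. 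Fixing $\gamma$ large enough for (b) then fixes the radius in (a).

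With these in hand, take any $\beta\in(0,\theta)$ satisfying \eqref{eq:gn-convergence:wbound}, initialise $z^0$ inside the ball from (a) intersected with the ball $\Ball{\realoptx}{r'}$ of \cref{theorem:beta=0}, and run \cref{alg:gn-overrelax} with the accuracy rule \eqref{eq:beta=0:alg-mod}; all hypotheses of \cref{theorem:beta=0} hold, hence so do the conclusions of \cref{thm:gn-convergence}. The step I expect to cost the most work is (a): upgrading the \emph{pixelwise} interiority conditions $\realopt q_{ij}\in\interior[\subdiff F(\realoptx)]_{ij}$ into one radius $\epsilon$ valid simultaneously along all of $\mathcal G$, since one must control how the fixed bad-pixel components of the subgradient and the edges shared between neighbouring good pixels interact — cleanest is probably to exhibit a subgradient selection over the inactive edges of $\grad_h$ that is interior on a subfamily covering every good pixel. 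The reduction in (b) to a block — not merely diagonal — lower bound on $M$ is the secondary delicate point.
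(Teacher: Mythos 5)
Your proposal follows essentially the same route as the paper's proof: split the pixels into those where $-[\grad G(\realoptx)]_{ij}$ lies in $\interior [\subdiff F(\realoptx)]_{ij}$ and the rest, take $\Gamma$ diagonal and supported on the former (the paper simply fixes $\gamma=2\rho+\theta$ rather than taking $\gamma$ large), derive the partial strong-convexity inequality by the argument of \cref{lemma:interior-local-sc}, and conclude via \cref{theorem:beta=0}. The two delicate points you flag --- a single interiority radius valid over all good pixels, and passing from the assumed diagonal entries of $\grad A(\realoptx)\grad A(\realoptx)^*$ to the operator inequality required by \cref{theorem:beta=0} --- are precisely the steps the paper treats tersely (``proceeding as in \cref{lemma:interior-local-sc}'' and ``by our assumptions''), so your outline is, if anything, more explicit than the published argument.
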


\begin{proof}

    With $\realopt{q} \defeq - \grad G(\realoptx)$ let $\mathcal{S}$ be the set of pixel coordinates $(i, j)$ satisfy  $\realopt{q}_{ij} \in \interior [\subdiff F(\realoptx)]_{ij}$.
    Then, if $(i, j) \not\in \mathcal{S}$, we have  $-[\grad G(\realoptx)]_{ij} \in \mathop{\mathrm{bd}} [\subdiff F(\realoptx)]_{ij}$.
    We take $\gamma=2\rho+\theta$ and $\Gamma$ such that $[\Gamma]_{ij,ij}=\gamma$ for pixels $(i,j) \in \mathcal{S}$ and zero in all other entries.
    Then, proceeding as in \cref{lemma:interior-local-sc}, we deduce the existence of $\rho>0$ such that
    \[
        F(z)-F(\realoptx) \ge \iprod{\realopt q}{z-\realoptx} + \frac{1}{2}\norm{z-\realoptx}_\Gamma^2
        \quad (z \in \B(\realoptx, \rho)).
    \]
    By our assumptions we also have $[\grad A(\realoptx)\grad A(\realoptx)^*]_{ij,ij} \ge \Gamma = \gamma \Id =(2\rho+\theta) \Id$.
    The rest follows from \cref{theorem:beta=0}.
\end{proof}

%!TEX root=./gn_overrelax.tex
%%%%%%%%%%%%%%%%%%%%%%%%%%%%%%%%%%%%%%%%%%%%%%%
\section{Additional cases (7--12)}
\label{sec:morecases}
%%%%%%%%%%%%%%%%%%%%%%%%%%%%%%%%%%%%%%%%%%%%%%%

Case 7 is complementary to Case 2; it uses the same geometry and same regularization scheme (Scheme 3) but true conductivity is different. In this case, the target contains a square-shaped inclusion with conductivity of $10^{-3}$ S/m and a conductive circular inclusion with conductivity $0.28$ S/m. The conductivity of the constant background is 0.028 S/m.
\crefname{figure}{Figures}{Figures}
The results of Case 3 are shown in \cref{fig:WT_Sim3_STV_J}--\ref{fig:WT_Sim3_STV_Recos}, and \cref{table:WTSim3STV}.
\crefname{figure}{Figure}{Figures} 

\cref{fig:WT_Sim3_STV_J} shows that RIPGN with relaxation parameters $w=1$ and $w=9/10$ does not converge. Furthermore, the relative error is considerably higher as the total variation regularization tends to round the shape of the resistive inclusion \cite{gonzalez2017isotropic}. In addition, the range of the conductivity is flattened. It is also notable that the fit in this case is better in terms of the objective function than in Case 2. Interpolating the true conductivity into the inversion mesh gives $J(\sigma_\mathrm{true}) = 1.0918\cdot 10^5$ and RE $= 3.8874$ \%.
\input{"Figures/Layout/Case7".tex}
\input{"Tables/WT_Sim3_STV".tex}

Similarly to Case 7, Case 8 is complementary to Case 3. In this case, the comparison to NL-PDPS is omitted due to excessively long computational times of NL-PDPS. The results of Case 8 are shown in \cref{fig:WT_Sim3_TV_J}--\ref{fig:WT_Sim3_TV_Recos}, and \cref{table:WTSim3TV}. Again, the computational times and the relative errors are improved when compared to the smoothed TV solutions in Case 3 (cf. \cref{table:WTSim3STV}), similarly to what happened between Cases 2 and 4. Also, the differences in computational times and relative errors between Case 4 and 5 are analogous to differences between Case 2 and 3.
\input{"Figures/Layout/Case8".tex}
\input{"Tables/WT_Sim3_TV".tex}    

\subsubsection{Cases 9 \& 10: Water tank experiments}
\label{sssec:WTE}
In Cases 9--10, we evaluate RIPGN with experimental data, using a water tank, the geometry of which corresponds to Cases 1--3 (and 7--8). The same objective function (Scheme 3; \cref{sssec:SNSR}) and parameters chosen in Cases 3 and 8 are used in these reconstructions. All reconstructions are computed with relaxation parameter $w=3/4$. 

Reconstructions in Cases 9--10 are shown in \cref{fig:WT_Meas_recos}. In both cases, the plastic inclusions appear as areas of low conductivity, and in Case 10, the metal inclusion causes an area of increased conductivity. These areas are able to capture the locations of the inclusions well and are easily distinguished from the background as the conductivities of the background and these areas are flat and sharp-edged. The background conductivity in both cases is between 0.02 S/m and 0.03 S/m, which is in the range of typical drinking water in room temperatures, and as expected, the conductivity near the plastic inclusion is very low compared to the background. However, there is some contrast loss in the conductivity around the metal inclusion in Case 10; the conductivity in this region is only about twice as much as the background (see \cref{ssec:effectsstv}). Furthermore, in both cases, the shapes of the inclusions are slightly distorted. This kind of distortion can be caused by a small discrepancy between the geometry of the mesh and the actual measurement setup and other modeling errors. The roundness of the objects could reinforced by, for example, increasing the value of the regularization parameter $\alpha$, but the parameter selection for the regularization is beyond the scope of this paper.

The results of the water tank experiments (Cases 9-10) confirm that the RIPGN method proposed in this paper is applicable to EIT imaging also with real measurement
data.
\begin{figure}[!tbp]
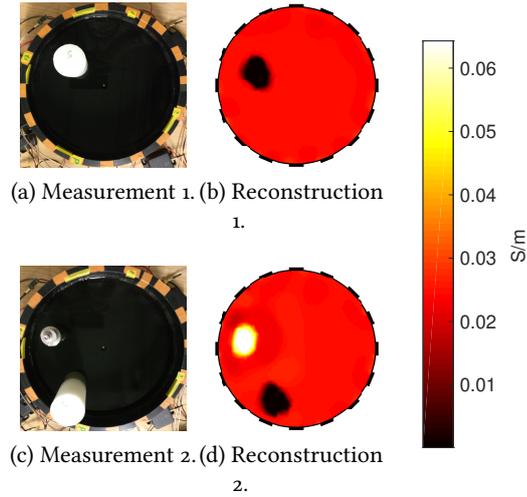

    \centering
    \begin{minipage}{0.45\textwidth}  
        \centering
        \subfloatrecoi{Measurement 1.}{0.33\textwidth}{"Figures/Photos/WT_Meas1_True".jpg}
        \subfloatrecoi{Reconstruction 1.}{0.33\textwidth}{"Figures/WT_Meas/WT_Meas1_w0.75".pdf}\\
        \subfloatrecoi{Measurement 2.}{0.33\textwidth}{"Figures/Photos/WT_Meas2_True".jpg}
        \subfloatrecoi{Reconstruction 2.}{0.33\textwidth}{"Figures/WT_Meas/WT_Meas2_w0.75".pdf}     
    \end{minipage}
    \subfloatcolorbar{-0.5cm}{-2.75cm}{5.5cm}{"Figures/WT_Meas/WT_Meas1_Colorbar".pdf}%
    \caption{Case 9 (top row) and Case 10 (bottom row). Photos of the measurement setup (left column) and the TV-based RIPGN-reconstructions with $w=3/4$.}
    \label{fig:WT_Meas_recos}
\end{figure}\FloatBarrierA% 

\subsubsection{Cases 11 \& 12: Sensing skin experiments}

Case 11 is complementary to Case 4; the measurements are done using the same sensing skin setup and computations use the same scheme (Scheme 3). An additional crack was made on the sensing for this measurement.
\cref{fig:Skin_Meas2_STV_Recos} (top left) shows a photograph of the sensing skin in Case 11. The results from this dataset are shown in\crefname{figure}{Figures}{Figures}  \cref{fig:Skin_Meas2_STV_J}--\ref{fig:Skin_Meas2_STV_Recos} and in \cref{table:SkinMeas2STV}.\crefname{figure}{Figure}{Figures}
In this case, RIPGN with relaxation parameter $w=1/4$ converges better than with the other relaxation parameters, including $w=1/2$. Although the convergence is better with $w=1/4$, \cref{fig:Skin_Meas2_STV_Recos} shows that impact of the relaxation parameter on the reconstruction quality is still fairly negligible. Contrarily, \cref{fig:Skin_Meas2_STV_J} and \cref{table:SkinMeas2STV} show that, again, the relaxation parameter heavily affects the computation times.

\input{"Figures/Layout/Case11".tex}
\input{"Tables/Skin_Meas2_STV".tex}

Case 12 is complementary to Case 5; it uses Scheme 3 and the same measurements as in Case 11. Results in Case 11, in are shown in \cref{fig:Skin_Meas2_TV_J}--\ref{fig:Skin_Meas2_TV_Recos}\crefname{figure}{Figure}{Figures} and \cref{table:SkinMeas2TV}. Differences between Case 11 and Case 12 are fairly similar to differences between Case 4 and 5.
\input{"Figures/Layout/Case12".tex}
\input{"Tables/Skin_Meas2_TV".tex} 

%\FloatBarrierA
%%%%%%%%%%%%%%%%%%%%%%%%%%%%%%%%%%%%%%%%%%%%%%%
\section{Additional reconstructions in Cases 1--6}
\label{sec:morecasestwo}
%%%%%%%%%%%%%%%%%%%%%%%%%%%%%%%%%%%%%%%%%%%%%%%

\crefname{figure}{Figures}{Figures}
\cref{fig:WT_Sim1_Recosall}--\ref{fig:WT3D_Meas1_View1_Recosall} show all reconstruction images computed in Cases 1--6, respectively.
\crefname{figure}{Figure}{Figures} 
\input{"Figures/Layout/Case1_all".tex}
\input{"Figures/Layout/Case2_all".tex}
\input{"Figures/Layout/Case3_all".tex}
\input{"Figures/Layout/Case4_all".tex}
\input{"Figures/Layout/Case5_all".tex}
\input{"Figures/Layout/Case6_all".tex}
\FloatBarrierA
%%%%%%%%%%%%%%%%%%%%%%%%%%%%%%%%%%%%%%%%%%%%%%%
\section{Complementary proximal mappings}\label{app:alg_details}
%%%%%%%%%%%%%%%%%%%%%%%%%%%%%%%%%%%%%%%%%%%%%%%

\cref{tab:proximalsG} collects the proximal mappings required in the algorithm implementations.
\begin{table}[!htbp]
    \centering    
        \caption{Proximal mappings of $G$ utilized in the algorithm implementations. For the hypercube $\zset$, $\protect\mathrm{proj}_{\left[ \zset_\mathrm{min},\zset_\mathrm{max} \right] }(x_i) = \mathrm{max} \left( \mathrm{min} \left(x_i, \zset_\mathrm{max} \right), \zset_\mathrm{min}   \right)  $.}   
    \begin{tabular}{ l | l }
        \hline
        $G(x)$  & i'th component of $\prox{t}{G}{x}$ \\
        \hline
        \hline
        \proxGtabaa & \proxGtabac \\
        \proxGtabba & \proxGtabbc \\
        \proxGtabca & \proxGtabcc \\
        \proxGtabda & \proxGtabdc \\
    \end{tabular}
    \label{tab:proximalsG}
\end{table}
\FloatBarrierA

%%%%%%%%%%%%%%%%%%%%%%%%%%%%%%%%%%%%%%%%%%%%%%%
\section{Details on the computation of the Jacobian}\label{app:jacobian_details}
%%%%%%%%%%%%%%%%%%%%%%%%%%%%%%%%%%%%%%%%%%%%%%%
The Jacobian $\nabla I(\sigma^k)^*$ can be constructed from the partial derivatives we described briefly in \cref{ssec:FME},
\begin{equation}\label{eq:dIds2}
	\frac{\partial I^p(\sigma^k)}{\partial \sigma_i} = -{\mathcal{K}}D(\sigma^k)^{-1}  \frac{\partial D(\sigma^k)}{\partial \sigma_i}\theta^p(\sigma^k),
\end{equation}
where we highlighted that $I^p = I^p(\sigma_1,\ldots,\sigma_N)$, $D=D(\sigma_1,\ldots,\sigma_N)$, and $\theta^p=\theta^p(\sigma_1,\ldots,\sigma_N)$ depend on the iteration through $\sigma^k$.
Note that $\theta^p$ can be obtained by solving the forward problem \eqref{eq:CEMfwd}. Furthermore, $x^* \defeq {\mathcal{K}}D(\sigma^k)^{-1}$, can be solved from the linear system $D(\sigma^k)^*x = \mathcal{K}^*$, similarly to the forward problem. 
Also note that $D$, as defined in \eqref{eq:CEMfwd}, is linear in $\sigma$, so the term $\frac{\partial D(\sigma^k)}{\partial \sigma_i}$ is independent of the iteration and can be preconstructed.
%This part can be computed with the same algorithm that is used to compute $D_1$, by setting $i$'th component of $\sigma$ to $1$ and others to $0$.
Note that the matrix $\frac{\partial D(\sigma^k)}{\partial \sigma_i} \in \R^{(N+L-1) \times (N+L-1)}$, is very sparse: if the degree of the node $i$ in the FEM grid is $n$, then this matrix has maximum of $(n+1)^2$ non-zero elements. Instead of storing it as a compressed sparse column (CSC) matrix, we store it as a dense $(n+1) \times (n+1)$ matrix together with indexing arrays to extract the relevant components of ${\mathcal{K}}D(\sigma^k)^{-1}$ and $\theta^p(\sigma^k)$ to compute the product in \eqref{eq:dIds2}.
This product can be computed very cheaply and fully in parallel over the nodes $i$.
Due to the substantial reduction in indexing, this approach in practise significantly improves the computational time of the Jacobian compared to CSC matrices.
Finally, the $i$:th column of Jacobian matrix is $\frac{\partial I}{\partial \sigma_i} = (\frac{\partial I^1}{\partial \sigma_i},\ldots,\frac{\partial I^L}{\partial \sigma_i})$.

\bibliographystyle{jnsao}
%\bibliography{abbrevs,gn_overrelax}

\end{document}